\newtheorem{theorem}{Theorem}
\newtheorem{proposition}{Proposition}
\newtheorem{lemma}{Lemma}
\newtheorem{proof}{Proof}
\title{Analysis of the Symmetric Join the Shortest Orbit Queue}
\author{Ioannis Dimitriou\\ 
Department of Mathematics, 
University of Patras, P.O. Box 
26500, Patras, Greece\\
E-mail: idimit@math.upatras.gr\\Website: \href{https://thalis.math.upatras.gr/~idimit/}{https://thalis.math.upatras.gr/~idimit/}}
\begin{document}
\maketitle

\begin{abstract}
This work introduces the join the shortest queue policy in the retrial setting. We consider a Markovian single server retrial system with two infinite capacity orbits. An arriving job finding the server busy, it is forwarded to the least loaded orbit. Otherwise, it is forwarded to an orbit randomly. Orbiting jobs of either type retry to access the server independently. We investigate the stability condition, the stationary tail decay rate, and obtain the equilibrium distribution by using the compensation method.\vspace{2mm}\\
\textbf{Keywords:} Join the Shortest Orbit Queue, Retrials, Compensation Method, Tail asymptotics, Stability
\end{abstract}



\section{Introduction}
In this work, we introduce the concept of the \textit{join the shortest queue} (JSQ) policy in the retrial setting. We consider a single server retrial system with two infinite capacity orbit queues. The service station can handle at most one job. Thus, an arriving job that find the server busy joins the least loaded orbit queue, and in case of a tie, the job joins either orbit queue with probability $1/2$. Orbiting jobs retry independently to connect with the service station after a random time period. Our model can be seen as a special type of a queueing network that contains three nodes: the main service node where blocking is possible, and two delay nodes for repeated trials. External arrivals are routed initially to the main service node. If the main service node is empty, the arriving job start service immediately, and leaves the network after the service completion. Otherwise, the arriving blocked job is routed to the least loaded delay node (i.e., the least loaded orbit queue), and becomes a source of retrial job. Our primary aim is to investigate its stationary behaviour by using the compensation method.

The (non-modulated) two-dimensional JSQ problem was initially studied in \cite{haight,king}, A compact mathematical method using generating functions was provided in \cite{coh1,fay1}. However, it does not lead to an explicit characterization of the equilibrium probabilities. In \cite{ad0,ad2,ad3}, the authors introduced the compensation method (CM), an elegant and direct method to obtain explicitly the equilibrium join queue-length distribution as infinite series of product form terms; for other related important works, see also \cite{Adan2016,boxhout,sax,stella}. In \cite{ad1}, the CM was used to analyze the JSQ policy in a two-queue system fed by Erlang arrivals. The queueing system in \cite{ad1} is described by a multilayer random walk in the quarter plane; see also \cite{selen}.\vspace{-0.1in}
\paragraph{Fundamental contribution:} In this work, we provide an exact analysis that unifies two queueing models: The JSQ model and the two-class retrial model. Our primary aim is to extend the applicability of the CM to random walks in the quarter plane modulated by a two-state Markov process, and in particular to retrial systems with two infinite capacity orbit queues; see Section \ref{sec:model}. In such a case, the phase process represents the state of the server, and affects the evolution of the level process, i.e, the orbit queue lengths in two ways: i) The rates at which certain transitions in the level process occur depend on the state of the phase process. Thus, a change in the phase might not immediately trigger a transition of the level process,
but changes its dynamics (indirect interaction). ii) A phase change does trigger an immediate transition of the level process (direct interaction). For this modulated two-dimensional random walk we investigate its stationary behaviour by using the \textit{compensation method}. We further study the \textit{stability condition} and investigate its \textit{stationary tail decay rate}. Our work, joint with \cite{ad1,selen}, indicate that CM can be extended to a class of multi-layered two-dimensional random walks that satisfy similar criteria as in \cite{ad3}. Thus, it is worth investigating under which conditions such an elegant solution is found through CM. This work serves as a first step towards extending the application of CM to JSQ retrial model with more than two orbits.\vspace{-0.1in}
\paragraph{Application oriented contribution:} Applications of this model can be found in relay-assisted cooperative communication system: There is a source user that transmits packets to a common destination node (i.e., the single service station), and a finite number of relay nodes (i.e., the orbit queues) that assist the source user by retransmitting its blocked packets, e.g., \cite{dim3,PappasTWC2015}. The JSQ protocol serves as the cooperation strategy among the source and the relays, under which, the user chooses to forward its blocked packet to the least loaded relay node. This works serves as a first step towards the analysis of even general retrial models operating under the JSQ policy.

The paper is organized as follows. In Section \ref{sec:model} we describe the model in detail, and investigate the necessary stability condition. Some useful preliminary results are presented in Section \ref{sec:decay}. The main result of this work that refers to the three-dimensional CM applied in a retrial network is given in Section \ref{sec:compe}. A numerical example is presented in Section \ref{sec:num}.\vspace{-0.2in}
 \section{Model description and stability condition}\label{sec:model}\vspace{-0.1in}
Consider a single server retrial system with two infinite capacity orbit queues. Jobs arrive at the system according to a Poisson process with rate $\lambda>0$. If an arriving job finds the server idle, it starts service immediately. Otherwise, it is routed to the least loaded orbit queue. In case both orbit queues have the same occupancy, the blocked job is routed to either orbit with probability 1/2. Orbiting jobs of either type retry independently to occupy the server after an exponentially distributed time period with rate $\alpha$, i.e., we consider the \textit{constant retrial policy}. Service times are independent and exponentially distributed with rate $\mu$. Let $Y(t)=\{(Q_{1}(t),Q_{2}(t),C(t)),t\geq0\}$, where $Q_{l}(t)$ the number of jobs stored in orbit $l$, $l=1,2,$ at time $t$, and by $C(t)$ the state of the server, i.e., $C(t)=1$, when it is busy and $C(t)=0$ when it is idle at time $t$, respectively. $Y(t)$ is an irreducible Markov process on $\{0,1,\ldots\}\times\{0,1,\ldots\}\times\{0,1\}$. Denote by $Y=\{(Q_{1},Q_{2},C)\}$ its stationary version.  Define the set of stationary probabilities $p_{i,j}(k)=\mathbb{P}(Q_{1}=i,Q_{2}=j,C=k)$. Let $J$ be a random variable indicating the orbit queue which an arriving blocked job joins. Clearly, $J$ is dependent on the vector $(Q_{1},Q_{2},C)$. Then, the equilibrium equations are
\begin{equation}
\begin{array}{r}
p_{i,j}(0)(\lambda+\alpha (1_{\{i>0\}}+1_{\{j>0\}}))=\mu p_{i,j}(1),\\
p_{i,j}(1)(\lambda+\mu)=\lambda p_{i,j}(0)+\alpha[p_{i+1,j}(0)+p_{i,j+1}(0)]\\+\lambda[p_{i-1,j}(1)\mathbb{P}(J=1|Q=(i-1,j,1))1_{\{i>0\}}+p_{i,j-1}(1)\mathbb{P}(J=2|Q=(i,j-1,1))1_{\{j>0\}}],
\end{array}\label{z0}
\end{equation}
where, $1_{\{E\}}$ the indicator function of the event $E$, $\mathbb{P}(J=l|Q=(Q_{1},Q_{2},1))=1_{\{Q_{l}\leq Q_{k}\}}/[\sum_{k=1}^{2}1_{\{Q_{k}=Q_{l}\}}]$
and $\mathbb{P}(J=l|Q=(Q_{1},Q_{2},0))=0$, $l=1,2$. We first establish necessary conditions, which also turn out to be sufficient.
\begin{proposition}\label{prop10}
\begin{equation*}
\begin{array}{c}
\mathbb{P}(C=1)=\frac{\lambda}{\mu},\,p_{0,.}(0)=\sum_{j=0}^{\infty}p_{0,j}(0)=p_{.,0}(0)=\sum_{i=0}^{\infty}p_{i,0}(0)=1-\frac{\lambda(\lambda+2\alpha)}{2\alpha\mu}.
\end{array}
\end{equation*}
\end{proposition}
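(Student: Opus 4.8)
The plan is to derive both identities from global--balance (``cut'') arguments applied to one--dimensional projections of the chain $Y$, together with the evident symmetry of the model. For the first identity I would track the total number of customers $N(t)=Q_{1}(t)+Q_{2}(t)+C(t)$. From the dynamics, $N$ increases by one at rate $\lambda$ in every state (an external arrival either seizes the idle server, raising $C$ from $0$ to $1$, or joins an orbit), it decreases by one at rate $\mu$ exactly when $C=1$ (a service completion), and it is left unchanged by a successful retrial (an orbiting job merely moves to the server). Balancing the probability flow across the cut $\{N\le n\}$ versus $\{N\ge n+1\}$ therefore gives $\lambda\,\mathbb{P}(N=n)=\mu\,\mathbb{P}(N=n+1,C=1)$ for all $n\ge0$; summing over $n\ge0$ and using $\{C=1\}\subseteq\{N\ge1\}$ yields $\lambda=\mu\,\mathbb{P}(C=1)$, i.e. $\mathbb{P}(C=1)=\lambda/\mu$ (the server throughput equals the input rate).

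For the second identity I would first note that, since the two orbits are interchangeable (equal retrial rate $\alpha$, ties in the JSQ rule broken with probability $1/2$), the stationary distribution is symmetric, $p_{i,j}(k)=p_{j,i}(k)$; in particular $p_{0,\cdot}(0)=p_{\cdot,0}(0)$ and $\mathbb{P}(Q_{1}>0,C=0)=\mathbb{P}(Q_{2}>0,C=0)$, so it suffices to evaluate $\mathbb{P}(Q_{1}>0,C=0)$. Summing the first set of equations in \eqref{z0} over all $i,j\ge0$ gives $\mu\,\mathbb{P}(C=1)$ on the right, and on the left $\lambda\,\mathbb{P}(C=0)+\alpha[\mathbb{P}(Q_{1}>0,C=0)+\mathbb{P}(Q_{2}>0,C=0)]$; inserting $\mathbb{P}(C=1)=\lambda/\mu$ and using symmetry, this reduces to $\mathbb{P}(Q_{1}>0,C=0)=\lambda^{2}/(2\alpha\mu)$. (Equivalently, a cut on $Q_{1}$, which rises only when a blocked arrival is routed to orbit $1$ and falls at rate $\alpha$ on $\{Q_{1}>0,C=0\}$, gives $\alpha\,\mathbb{P}(Q_{1}>0,C=0)$ equal to one half of the blocked stream of rate $\lambda\,\mathbb{P}(C=1)=\lambda^{2}/\mu$.) Hence
\[
p_{0,\cdot}(0)=\mathbb{P}(Q_{1}=0,C=0)=\mathbb{P}(C=0)-\mathbb{P}(Q_{1}>0,C=0)=1-\frac{\lambda}{\mu}-\frac{\lambda^{2}}{2\alpha\mu}=1-\frac{\lambda(\lambda+2\alpha)}{2\alpha\mu},
\]
and $p_{\cdot,0}(0)$ equals the same by symmetry.

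The calculations are short once the right cuts are identified, so the only real care needed is in the bookkeeping. One works under the standing assumption that a stationary distribution exists (the statement records \emph{necessary} conditions for ergodicity), which justifies the termwise summations and the index shifts; and when summing the balance equations one must handle the routing term using $\mathbb{P}(J=1\mid Q=(i,j,1))+\mathbb{P}(J=2\mid Q=(i,j,1))=1$ on $\{C=1\}$ and invoke the symmetry for the even split between the orbits. I would expect this routing/symmetry bookkeeping, rather than any genuine difficulty, to be the point most easily mishandled; showing that these necessary conditions are also sufficient is a separate matter, presumably settled later via the explicit solution.
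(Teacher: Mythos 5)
Your proof is correct, and it reaches the same conclusions by a route that differs from the paper's in a useful way. The paper derives both identities simultaneously by solving a small linear system: it takes the cut between $\{Q_{1}=i,C=1\}$ and $\{Q_{1}=i+1,C=0\}$ (summed over $i$ and combined with its mirror) to get $\lambda\,\mathbb{P}(C=1)=2\alpha[\mathbb{P}(C=0)-p_{0,\cdot}(0)]$, pairs this with the summed first balance equations $(\lambda+2\alpha)\mathbb{P}(C=0)-\mu\,\mathbb{P}(C=1)=2\alpha p_{0,\cdot}(0)$, and then solves together with normalization. You instead obtain $\mathbb{P}(C=1)=\lambda/\mu$ directly and independently, by cutting on the total population $N=Q_{1}+Q_{2}+C$ and exploiting the observation that a successful retrial leaves $N$ invariant; this decouples the two unknowns, after which your use of the summed first balance equation (the paper's \eqref{z5} in disguise) and symmetry finishes $p_{0,\cdot}(0)$ in one step. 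Your parenthetical ``cut on $Q_{1}$'' alternative is in fact the paper's cut argument, so you have both in hand. Net effect: your $N$-cut is a cleaner, self-contained way to get the server utilization, while the paper's two-equation approach is marginally more economical in that a single family of cuts yields both \eqref{z4} and, after the sum, the needed system. Both are equally rigorous given the standing assumption that a stationary distribution exists, which you correctly flag as what licenses the termwise summations.
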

\begin{proof}
Note that due to the symmetry of the model $p_{.,0}(0)=p_{0,.}(0)$. For each $i= 0, 1, 2,\ldots$ we consider the cut between the the states $\{Q_{1} = i, C = 1\}$ and $\{Q_{1} = i+1, C = 0\}$. This yields $\lambda \mathbb{P}(J=1,Q_{1}=i,C=1)=\alpha p_{i+1,.}(0).$ Summing for all $i=0,1,\ldots$ results in $\lambda \mathbb{P}(J=1,C=1)=\alpha[\mathbb{P}(C=0)-p_{0,.}(0)]$. Similarly, $\lambda \mathbb{P}(J=2,C=1)=\alpha[\mathbb{P}(C=0)-p_{.,0}(0)]$. Summing the last two expressions yields
\begin{equation}
\begin{array}{c}
\lambda \mathbb{P}(C=1)=2\alpha \mathbb{P}(C=0)-\alpha[p_{0,.}(0)+p_{0,.}(0)]=2\alpha \mathbb{P}(C=0)-2\alpha p_{0,.}(0),
\end{array}\label{z4}
\end{equation}
where the second equality is due to the symmetry of the model. Summing the first in \eqref{z0} for all $i,j=0,1,\ldots$ yields
\begin{equation}
\begin{array}{c}
(\lambda+2\alpha)\mathbb{P}(C=0)-\mu \mathbb{P}(C=1)=\alpha[p_{.,0}(0)+p_{0,.}(0)]=2\alpha p_{0,.}(0).
\end{array}\label{z5}
\end{equation}
Then, using \eqref{z4}, \eqref{z5}, and having in mind that $\mathbb{P}(C=0)+\mathbb{P}(C=1)=1$, we obtain $\mathbb{P}(C=1)=\frac{\lambda}{\mu}$. Then, substituting back in \eqref{z4} we obtain
$p_{0,.}(0)=p_{.,0}(0)=1-\frac{\lambda(\lambda+2\alpha)}{2\alpha\mu}\geq 0$.
\end{proof}
\begin{proposition}\label{prop20}
If $\rho:=\lambda(\lambda+2\alpha)/(2\alpha\mu)=1$, then
$p_{i,j}(0)=p_{i,j}(1)=0$, $i,j=0,1,\ldots$.
\end{proposition}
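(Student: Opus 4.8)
The plan is to bootstrap from Proposition \ref{prop10}. When $\rho=1$, Proposition \ref{prop10} gives $p_{0,\cdot}(0)=\sum_{j\ge 0}p_{0,j}(0)=1-\rho=0$; since every summand is nonnegative, $p_{0,j}(0)=0$ for all $j$, and by the symmetry of the model also $p_{i,0}(0)=0$ for all $i$. Substituting $i=0$ into the first equation of \eqref{z0} then yields $\mu p_{0,j}(1)=p_{0,j}(0)\bigl(\lambda+\alpha 1_{\{j>0\}}\bigr)=0$, so $p_{0,j}(1)=0$ for all $j$ as well. This settles the whole column $i=0$ for both server states.

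Next I would run an induction on $i$ with hypothesis $H(i)$: ``$p_{m,j}(0)=p_{m,j}(1)=0$ for every $0\le m\le i$ and every $j\ge 0$.'' The base case $H(0)$ is exactly what was just shown. Assuming $H(i)$, apply the second equation of \eqref{z0} at the state $(i,j,1)$: the left-hand side $(\lambda+\mu)p_{i,j}(1)$ vanishes, and on the right-hand side the terms $\lambda p_{i,j}(0)$, $\alpha p_{i,j+1}(0)$, $\lambda p_{i-1,j}(1)\mathbb{P}(J=1\mid\cdot)1_{\{i>0\}}$ and $\lambda p_{i,j-1}(1)\mathbb{P}(J=2\mid\cdot)1_{\{j>0\}}$ all vanish by $H(i)$ (the $p_{i-1,j}(1)$ term already being killed by the indicator when $i=0$). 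What remains is $0=\alpha p_{i+1,j}(0)$, hence $p_{i+1,j}(0)=0$ for all $j$. Feeding this back into the first equation of \eqref{z0} at $(i+1,j)$ gives $\mu p_{i+1,j}(1)=p_{i+1,j}(0)\bigl(\lambda+\alpha(1+1_{\{j>0\}})\bigr)=0$, so $p_{i+1,j}(1)=0$ for all $j$, establishing $H(i+1)$. By induction $H(i)$ holds for every $i$, which is the assertion $p_{i,j}(0)=p_{i,j}(1)=0$ for all $i,j$.

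I do not expect a genuine obstacle here; the only point requiring care is the interplay of the two balance relations. The first equation of \eqref{z0} is ``vertical'' in the phase variable: it converts a zero of the $C=0$ layer into a zero of the $C=1$ layer at the same site. The second equation, read at $C=1$, is the one that advances the first coordinate from $i$ to $i+1$, and to close the inductive step one genuinely needs the full strength of $H(i)$ — both layers up to column $i$ and the single extra neighbour $p_{i,j+1}(0)$, which is covered since $i\le i$. It is also worth remarking that, combined with the normalization $\sum_{i,j,k}p_{i,j}(k)=1$, the conclusion is contradictory, so no stationary distribution exists at $\rho=1$; this is consistent with $\rho<1$ being the (necessary and, as will be shown, sufficient) stability condition.
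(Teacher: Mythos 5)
Your proof is correct and follows essentially the same strategy as the paper's: obtain boundary zeros from $p_{0,\cdot}(0)=p_{\cdot,0}(0)=1-\rho=0$, transfer them to the $C=1$ layer via the first balance equation, and then use the second balance equation to propagate zeros inductively across the state space. The only differences are cosmetic: you induct on $i$ whereas the paper inducts on $j$ (equivalent by the model's symmetry), and you carry both server-state layers together in the induction hypothesis $H(i)$, while the paper first proves $p_{i,j}(0)=0$ by induction and then cleans up $p_{i,j}(1)$ afterwards — your packaging is arguably a bit tidier, but the mechanism is identical.
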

\begin{proof}
Let $\rho=1$. Thus, $p_{0,.}(0)=p_{.,0}(0)=0$. This means that $p_{0,j}(0)=0$, $j=0,1,\ldots$ (resp. $p_{i,0}(0)=0$, $i=0,1,\ldots$). Therefore, from the first in \eqref{z0} we have $p_{0,j}(1)=0$, $j=0,1,\ldots$ (resp. $p_{i,0}(1)=0$, $i=0,1,\ldots$). We now use an induction argument to show that $p_{i,j}(0)=0$ for $i,j=0,1,\ldots$. We have already prove the assertion for $j=0$. Assume now that $p_{i,j}(0)=0$ for $j=0,1,\ldots,w$. We show that it is also true for $j= w + 1$. From the first in \eqref{z0} and the induction assumption, $p_{i,w}(0)=p_{i,w}(1)$, $i=1,2,\ldots$. The latter equality implies, using the second in \eqref{z0} that $p_{i,w+1}(0)=0$. Thus, $p_{i,j}(0)=0$ for $i=0,1,\ldots$ and $j=w+1$, and completes the induction argument, proving that $p_{i,j}(0)=0$ for $i,j=0,1,\ldots$. It remains to show that $p_{i,j}(1)=0$ for $i,j=0,1,\ldots$. The latter is true for $i,j=1,2,\ldots$ due to the first in \eqref{z0}. It is also true for $j=0$, $i=0,1,\ldots$, as shown previously. It remains to investigate the case for $i=0$, $j=0,1,\ldots$. Using the first in \eqref{z0} for $i=0$, $j=0,1,\ldots$ and the fact that $p_{0,j}(0)=0$ for $j=0,1,\ldots$, we get $p_{0,j}(1)=0$, while we have already noticed that $p_{0,0}(1)=0$. Summarizing, $p_{i,j}(0)=p_{i,j}(1)=0$, $i,j=0,1,\ldots$, so that $\mathbb{P}(Q_{1}=i,Q_{2}=j)=p_{i,j}(0)+p_{i,j}(1)=0$.
\end{proof}\vspace{-0.1in}

Propositions \ref{prop10}, \ref{prop20} imply that the condition $\rho<1$ is necessary for the system to be stable. We will show in Section \ref{sec:compe} that this condition is also sufficient. Assume hereon that $\rho<1$. Note that there is an intuitive explanation of the stability condition: As shown in Proposition \ref{prop10}, in a stable system, $\lambda/\mu$ is the fraction of time the server in the main queue is busy, or equivalently, is the proportion of jobs sent to the orbit queues. Thus, the maximal rate at which jobs flow into orbit queues is $\frac{\lambda}{\mu}(\lambda+2\alpha)$. This rate must be smaller than the corresponding maximal retrial rate, which is $2\alpha$.

In deriving the stationary distribution using CM, it is more convenient to work with the transformed process $X(t)=\{(X_{1}(t),X_{2}(t),C(t)),t\geq0\}$, where $X_{1}(t)=min\{Q_{1}(t),Q_{2}(t)\}$, $X_{2}(t)=|Q_{2}(t)-Q_{1}(t)|$, and state space $S=\{(m,n,k):m,n=0,1,\ldots,k=0,1\}$; see Figure \ref{sss}. Our aim is to determine $q_{m,n}(k)=\lim_{t\to\infty}\mathbb{P}((X_{1}(t),X_{2}(t),C(t))=(m,n,k))$, $(m,n,k)\in S$. Let the column vector $\mathbf{q}(m,n):=(q_{m,n}(0),q_{m,n}(1))^{T}$, where $\mathbf{x}^{T}$ denotes the transpose of vector (or matrix) $\mathbf{x}$. The equilibrium equations are now written as follows:
\small{\begin{eqnarray}
\mathbf{A}_{0,0}\mathbf{q}(0,0)+\mathbf{A}_{0,-1}\mathbf{q}(0,1)=\mathbf{0}\label{e1},\\
\mathbf{B}_{0,0}\mathbf{q}(0,1)+\mathbf{A}_{0,-1}\mathbf{q}(0,2)+2\mathbf{A}_{-1,1}\mathbf{q}(1,0)+\mathbf{A}_{0,1}\mathbf{q}(0,0)=\mathbf{0}\label{e2},\\
\mathbf{B}_{0,0}\mathbf{q}(0,n)+\mathbf{A}_{0,-1}\mathbf{q}(0,n+1)+\mathbf{A}_{-1,1}\mathbf{q}(1,n-1)=\mathbf{0},\,n\geq 2\label{e3}\\
\mathbf{C}_{0,0}\mathbf{q}(m,0)+\mathbf{A}_{0,-1}\mathbf{q}(m,1)+\mathbf{A}_{1,-1}\mathbf{q}(m-1,1)=\mathbf{0},\,m\geq 1\label{e4}\\
\mathbf{C}_{0,0}\mathbf{q}(m,1)+\mathbf{A}_{0,-1}\mathbf{q}(m,2)+2\mathbf{A}_{-1,1}\mathbf{q}(m+1,0)+\mathbf{A}_{1,-1}\mathbf{q}(m-1,2)+\mathbf{A}_{0,1}\mathbf{q}(m,0)=\mathbf{0},\,m\geq 1\label{e5}\\
\mathbf{C}_{0,0}\mathbf{q}(m,n)+\mathbf{A}_{0,-1}\mathbf{q}(m,n+1)+\mathbf{A}_{1,-1}\mathbf{q}(m-1,n+1)+\mathbf{A}_{-1,1}\mathbf{q}(m+1,n-1)=\mathbf{0},\,m\geq 1,n\geq2,\label{e6}
\end{eqnarray}}
where $\mathbf{B}_{0,0}=\mathbf{A}_{0,0}-\mathbf{H}$, $\mathbf{C}_{0,0}=\mathbf{A}_{0,0}-2\mathbf{H}$,\vspace{-0.1in}
\small{\begin{displaymath}
\begin{array}{rl}
\mathbf{A}_{1,-1}=\mathbf{A}_{0,1}=\begin{pmatrix}
0&0\\0&\lambda
\end{pmatrix},&\mathbf{A}_{0,-1}=\mathbf{A}_{-1,1}=\begin{pmatrix}
0&0\\\alpha&0
\end{pmatrix},\,
\mathbf{A}_{0,0}=\begin{pmatrix}
-\lambda&\mu\\\lambda&-(\lambda+\mu)
\end{pmatrix},\,
\mathbf{H}=\begin{pmatrix}
\alpha&0\\0&0
\end{pmatrix}.\vspace{-0.1in}
\end{array}
\end{displaymath}}
The transition rate matrix of $\{X(t)\}$ is given by
\begin{displaymath}
\begin{array}{ccc}
\mathbf{Q}=\begin{pmatrix}
\bar{T}_{0}&T_{1}&&&\\
T_{-1}&T_{0}&T_{1}&&\\
&T_{-1}&T_{0}&T_{1}&\\
&&\ddots&\ddots&\ddots\\
\end{pmatrix},&\small{\bar{T}_{0}=\begin{pmatrix}
\mathbf{A}^{T}_{0,0}&\mathbf{A}^{T}_{0,1}&&\\
\mathbf{A}^{T}_{0,-1}&\mathbf{B}^{T}_{0,0}&&\\
&\mathbf{A}^{T}_{0,-1}&\mathbf{B}^{T}_{0,0}&\\
&&\ddots&\ddots\\
\end{pmatrix}},&\small{T_{1}=\begin{pmatrix}
\mathbb{O}&\mathbb{O}&&\\
\mathbf{A}^{T}_{1,-1}&\mathbb{O}&&\\
&\mathbf{A}^{T}_{1,-1}&\mathbb{O}&\\
&&\ddots&\ddots\\
\end{pmatrix}},\\
\small{T_{0}=\begin{pmatrix}
\mathbf{C}^{T}_{0,0}&\mathbf{A}^{T}_{0,1}&&\\
\mathbf{A}^{T}_{0,-1}&\mathbf{C}^{T}_{0,0}&&\\
&\mathbf{A}^{T}_{0,-1}&\mathbf{C}^{T}_{0,0}&\\
&&\ddots&\ddots\\
\end{pmatrix}},&\small{T_{-1}=\begin{pmatrix}
\mathbb{O}&2\mathbf{A}^{T}_{-1,1}&&\\
&\mathbb{O}&\mathbf{A}^{T}_{-1,1}&&\\
&&\mathbb{O}&\mathbf{A}^{T}_{-1,1}&\\
&&&\ddots&\ddots\\
\end{pmatrix},}
\end{array}
\end{displaymath}
so that $\{X(t)\}$ is a quasi birth death (QBD) process with repeated blocks $T_{-1}$, $T_{0}$, $T_{1}$. 
\begin{figure}[h]
\centering
\includegraphics[scale=0.5]{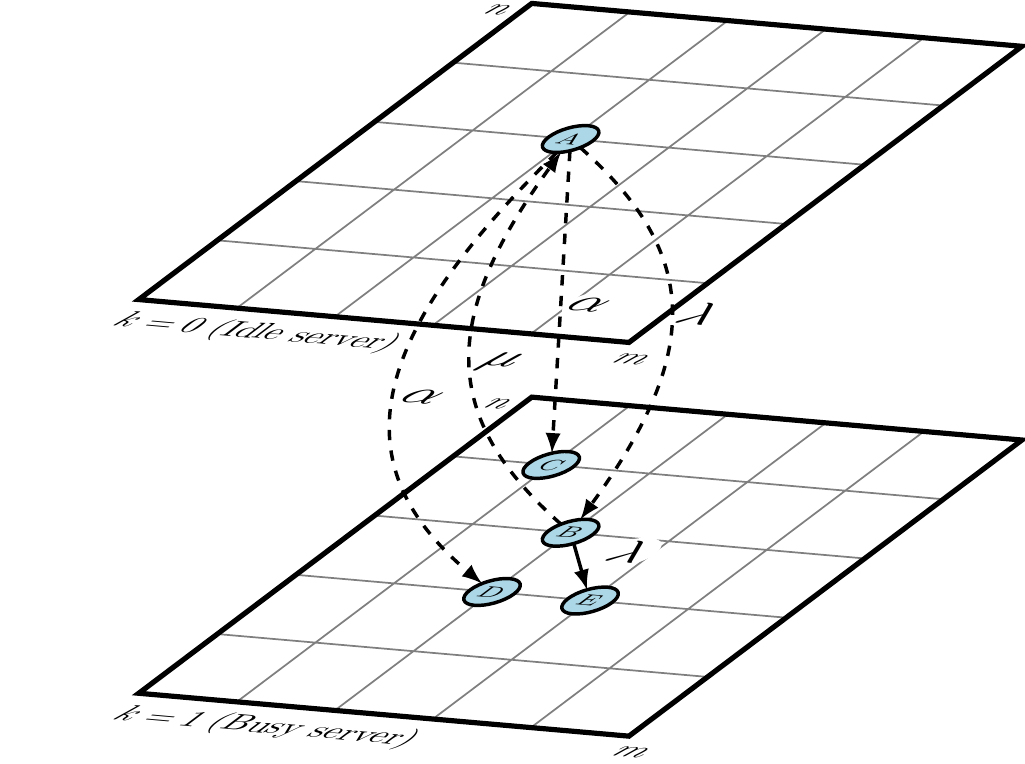}\,\,\,\,\,\,\,\,\includegraphics[scale=0.6]{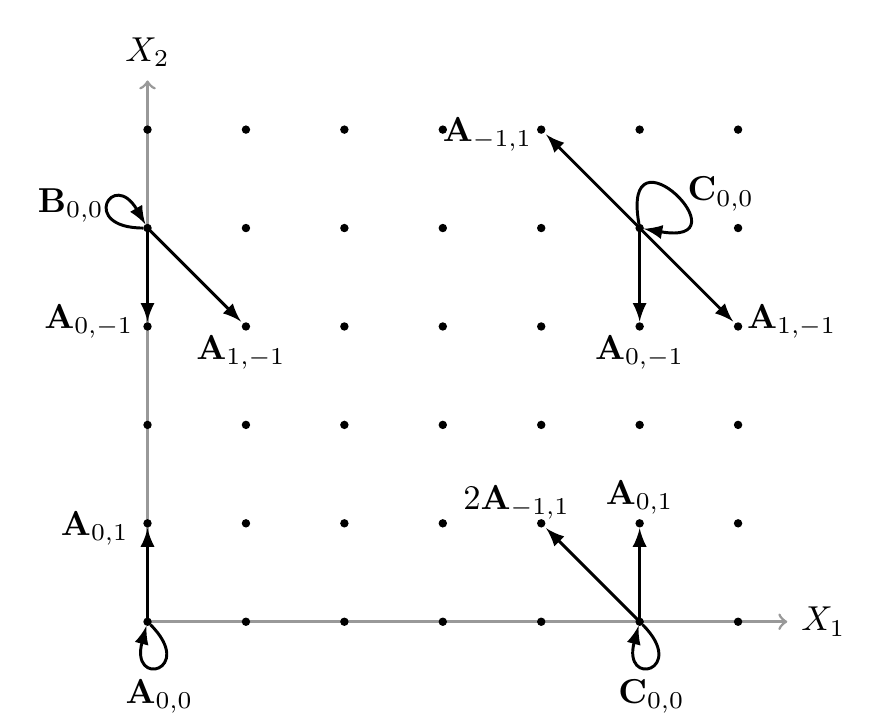}
\caption{Transition diagram of the transformed process.}
\label{sss}
\end{figure}\vspace{-0.2in}
\section{Preliminary results}\label{sec:decay}
In this section we present two results, which provide intuition that the equilibrium distribution $q_{m,n}(k)$ can be written in product form. We first show that the stationary tail probabilities for $min\{Q_{1},Q_{2}\}$ are asymptotically geometric when the difference of the orbit queue sizes and the state of the server are fixed. This result, among other mentioned in the following, indicates that as $m\to\infty$, $q_{m,n}(k)$ would have a product form, and its tail decay rate is $\rho^{2}$. Let $S_{*}=\{0,1,\ldots\}\times\{0,1\}$.
\begin{theorem}\label{dec}
For $\rho<1$, and fixed $(n,k)$, $\lim_{m\to\infty}\rho^{-2m}q_{m,n}(k)=Wp(n,k)$, $W>0$, $\mathbf{p}=\{p(n,k);(n,k)\in S_{*}\}$ as given in Lemma \ref{lem8}. \vspace{-0.1in}
\end{theorem}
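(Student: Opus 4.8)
The plan is to read off the tail asymptotics from the QBD structure set up above. In the level variable $m=X_{1}$ with the (countably infinite) phase $(n,k)\in S_{*}$, $\{X(t)\}$ is a level-independent QBD for $m\ge1$ with repeated blocks $T_{-1},T_{0},T_{1}$, only the level $m=0$ being exceptional. Let $\mathbf{q}_{m}:=(\mathbf{q}(m,0)^{T},\mathbf{q}(m,1)^{T},\ldots)$ be the (infinite) row vector collecting the phases at level $m$. The first step is the matrix-geometric representation $\mathbf{q}_{m}=\mathbf{q}_{1}R^{\,m-1}$, $m\ge1$, where $R$ is the minimal nonnegative solution of $T_{1}+RT_{0}+R^{2}T_{-1}=\mathbb{O}$. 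Under $\rho<1$ --- which, as Section \ref{sec:compe} shows, is exactly the stability condition --- the chain is positive recurrent, hence $\mathrm{sp}(R)<1$; and since $\{X(t)\}$ is irreducible, $R$ is irreducible on $S_{*}$, so $\mathrm{sp}(R)$ is an eigenvalue of $R$ carrying a strictly positive left eigenvector. This already gives the shape of the claim: for each fixed $(n,k)$, $q_{m,n}(k)$ is asymptotically geometric in $m$ with one common rate $\eta:=\mathrm{sp}(R)$, and $\eta^{-m}q_{m,n}(k)\to W\,p(n,k)$ for some $W>0$, with $\mathbf{p}$ a scalar multiple of that positive left eigenvector (this uses the $\mathrm{sp}(R)$-positivity of $R$, so that $R^{m}/\mathrm{sp}(R)^{m}$ has a rank-one positive limit).

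It then remains to identify $\eta=\rho^{2}$ and to recognise $\mathbf{p}$ as the vector of Lemma \ref{lem8}. For this I substitute the pure geometric ansatz $\mathbf{q}(m,n)=\eta^{m}\mathbf{u}(n)$ into the interior equations \eqref{e4}--\eqref{e6}. Equation \eqref{e6} collapses, for $n\ge2$, to the constant-coefficient recursion
\[
(\mathbf{A}_{0,-1}+\eta^{-1}\mathbf{A}_{1,-1})\,\mathbf{u}(n+1)+\mathbf{C}_{0,0}\,\mathbf{u}(n)+\eta\,\mathbf{A}_{-1,1}\,\mathbf{u}(n-1)=\mathbf{0},
\]
whose $n$-decaying solutions are combinations of modes $\mathbf{u}(n)=\beta^{n}\mathbf{w}$ with $|\beta|<1$ solving $\det\bigl[\beta^{2}(\mathbf{A}_{0,-1}+\eta^{-1}\mathbf{A}_{1,-1})+\beta\,\mathbf{C}_{0,0}+\eta\,\mathbf{A}_{-1,1}\bigr]=0$, while \eqref{e4} and \eqref{e5} (phases $n=0,1$) furnish the boundary relations that pin $\eta$ down. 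Inserting the explicit $2\times2$ blocks and using $\mathbf{A}_{0,-1}=\mathbf{A}_{-1,1}$ and $\mathbf{A}_{1,-1}=\mathbf{A}_{0,1}$, both the determinant condition and the boundary relations become scalar equations; this is exactly the system solved in Lemma \ref{lem8}, which produces the rate $\rho^{2}$ together with a componentwise positive, summable phase vector $\mathbf{p}$ such that $\rho^{2m}\mathbf{p}$ solves \eqref{e4}--\eqref{e6}. Since a positive summable geometric invariant vector of a positive-recurrent QBD can only have the rate $\mathrm{sp}(R)$, this forces $\mathrm{sp}(R)=\rho^{2}$ and identifies $\mathbf{p}$, up to normalisation, with the left Perron eigenvector of $R$; combined with the first paragraph, the limit in the statement follows with some $W>0$ (determined by $\mathbf{q}_{1}$ and $R$).

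The genuinely delicate point is not this $2\times2$ algebra but making the countable-phase asymptotics rigorous: one has to exclude that the true decay of $q_{m,n}(k)$ is governed by the phase direction $n\to\infty$ rather than by the geometric mode $\eta^{m}$ --- equivalently, one has to check that $R$ is $\mathrm{sp}(R)$-positive and that every root $\beta$ occurring above lies strictly inside the unit disc, so that no mass leaks to phase infinity. I expect to settle this in the spirit of the compensation-method analysis of \cite{ad3}: control the kernel curve $\det[\,\cdot\,]=0$ near the dominant $\eta$, verify it stays inside the relevant polydisc, and then invoke the corresponding QBD tail theorem. (An alternative that sidesteps $R$ is to analyse the two-variable generating function $\sum_{m,n}z^{m}\zeta^{n}\mathbf{q}(m,n)$ and locate its dominant singularity in $z$; the price is heavier bookkeeping at the two-tier boundary $m=0$, $n\in\{0,1\}$.) The remaining items --- positivity of $W$ from $\mathbf{q}_{1}\ne\mathbf{0}$ and minimality of $R$, and pointwise convergence via $\mathbf{q}_{m}=\mathbf{q}_{1}R^{m-1}$ --- are routine.
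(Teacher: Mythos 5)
Your proposal rests on the classical matrix-geometric picture $\mathbf{q}_m=\mathbf{q}_1 R^{m-1}$ together with $\mathrm{sp}(R)$-positivity of $R$, but the phase space $S_*$ here is \emph{countably infinite}, and the facts you invoke in the first paragraph are not automatic in that setting. For an infinite irreducible nonnegative matrix, $\mathrm{sp}(R)$ need not be an eigenvalue, need not carry a positive eigenvector, and $R^m/\mathrm{sp}(R)^m$ need not converge to a rank-one limit; each of these requires $R$ to be $\mathrm{sp}(R)$-positive, which is precisely the kind of thing one has to prove, not assume. Worse, for infinite-phase QBDs the tail decay rate of the stationary distribution can be strictly smaller than the convergence parameter of $R$ when the boundary behaviour dominates; your sentence ``a positive summable geometric invariant vector of a positive-recurrent QBD can only have the rate $\mathrm{sp}(R)$'' is a finite-phase fact that does not transfer. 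You acknowledge this gap in your last paragraph, but ``settle this in the spirit of the compensation-method analysis of \cite{ad3}'' is not a proof, and the CM is the machinery of Section~\ref{sec:compe}, not of this preliminary theorem.

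The paper closes exactly this gap by a different route: it treats $\{X(t)\}$ as a one-arithmetic Markov-additive process and appeals to the sufficient conditions of \cite[Proposition~3.1]{sakuma}, which demand two explicit objects and two finiteness checks, namely a positive right vector $\mathbf{y}$ with $K\mathbf{y}=\mathbf{0}$, a positive left vector $\mathbf{p}$ with $\mathbf{p}K=\mathbf{0}$, $\mathbf{p}\mathbf{y}<\infty$, and crucially $\mathbf{q}^T(0)\mathbf{y}<\infty$. The paper constructs $\mathbf{y}=\{\rho^{-n}\mathbf{v}\}$ explicitly, obtains $\mathbf{p}$ via the similarity transform $K_D=D^{-1}KD$ (which turns $K$ into an ergodic QBD generator), and then proves the boundary bound $\mathbf{q}^T(0)\mathbf{y}<\infty$ by a truncation argument on a modified ``longer-orbit'' model. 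That last check is the real content --- it is what excludes the scenario you flag of the decay being governed by phase infinity --- and your proposal has no analogue of it. So the gap is not a polish issue: your argument as written would also ``prove'' a geometric tail with rate $\mathrm{sp}(R)$ in infinite-phase QBDs where that conclusion is false, because it never examines the boundary vector $\mathbf{q}(0,\cdot)$ at all.

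Two smaller points. First, the $2\times2$ identification of $\rho^2$ and of the phase vector is sound and agrees in spirit with the paper's explicit $\mathbf{v}$ and the determinant computation in Lemma~\ref{lem0}; that part is fine. Second, your alternative suggestion via a two-variable generating function and a dominant singularity would face the same obstruction: one must still control the contribution of the two-tier boundary, and the paper's Markov-additive criteria are essentially a packaged form of doing exactly that.
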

\begin{proof}
See \ref{appc}.
\end{proof}\vspace{-0.1in}

Motivated by the standard Markovian JSQ model, in which the tail decay rate is the square of the tail decay rate of the M/M/2, Theorem \ref{dec} states that our model has a similar behaviour. More precisely, consider a single server retrial queue with a single orbit of infinite capacity, in which jobs arrive according to a Poisson process with rate $\lambda$, and service times are exponentially distributed with rate $\mu$. Arriving jobs that find the server busy, are routed to the orbit, from which, they retry to access the server according to the \textit{limited} classical retrial policy: If there is only one job in orbit, it retries after an an exponentially distributed time with rate $\alpha$. If there are at least two jobs in orbit, the retrial rate changes to $2\alpha$. Denote by $Q$ the equilibrium orbit queue length of this model, called the \textit{reference model}. Using the matrix geometric method \cite{neuts} we can easily show that the decay rate of this model is $\rho$ (to save space we omit the proof). Theorem \ref{dec}, can be understood by comparing the original model with the \textit{reference model}. Since both models work at full capacity whenever the total number of customers grows, we expect that $\mathbb{P}({Q}_1+{Q}_2=m)$, $\mathbb{P}({Q}=m)$ will have the same decay rate. We also expect that for increasing values of $m$, $\mathbb{P}(min\{Q_{1},Q_{2}\}=m)\simeq\mathbb{P}({Q}_1+{Q}_2=2m)\simeq\mathbb{P}({Q}=2m)$, since the JSQ policy constantly aims at balancing the lengths of the two orbit queues over time. Therefore, Theorem \ref{dec} states that the decay rate of the tail probabilities for $\min\{Q_1,Q_2\}$ for fixed values of the server state and of the difference of the queue lengths equals to the square of the decay rate of the tail probabilities of $Q$. The \textit{reference model} for our JSQ with retrials is the analogue of M/M/2 for the classical JSQ.
\begin{figure}[htb]
\centering
\includegraphics[scale=0.5]{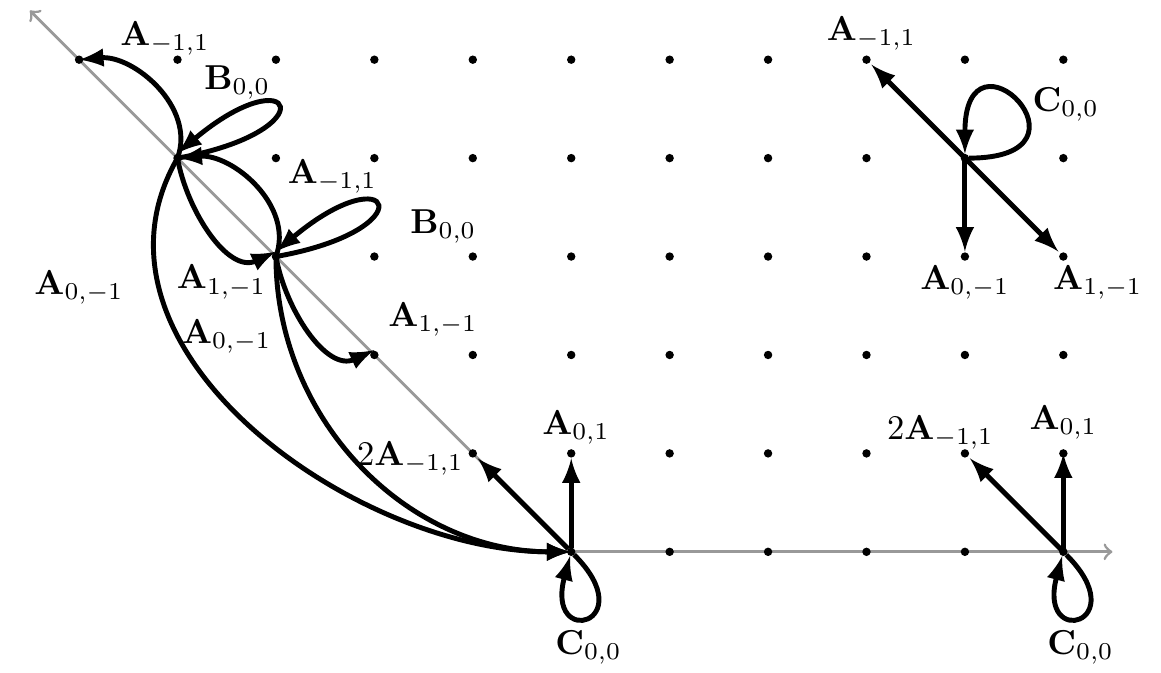}
\caption{Transition diagram of the modified model.}
\label{ssso}
\end{figure}

Next, we conjecture that the inner balance equations have a product form solution. To show this, we exploit the result developed in Theorem \ref{dec} about tail asymptotics. We construct a closely related model that has the same behaviour in the interior as the original model: Starting from the state space of the original model we bend the vertical axis such that the modified model has the same equilibrium equations in the interior and on the horizontal boundary; see Figure \ref{ssso}. \vspace{-0.1in}
\begin{lemma}\label{lem0}
For $\rho<1$, the equilibrium equations \eqref{e4}-\eqref{e6} have a unique up to a constant solution of the form\vspace{-0.15in}
\small{\begin{equation}
\begin{array}{c}
\mathbf{q}(m,n)=\rho^{2m}\mathbf{u}(n),\,m>0,n\geq0,\vspace{-0.15in}
\end{array}
\label{eq_hat_p-geom}
\end{equation}}
with $\mathbf{u}(n)=(u_{0}(n),u_{1}(n))^{T}$ non-zero such that $\sum_{n=0}^{\infty}\rho^{-2n}u_{k}(n)<\infty$, $k=0,1$.
\end{lemma}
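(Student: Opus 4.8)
The plan is to substitute the ansatz $\mathbf{q}(m,n)=\rho^{2m}\mathbf{u}(n)$ into the interior and horizontal-boundary equations \eqref{e4}--\eqref{e6} and reduce everything to a linear recursion for the two-dimensional vectors $\mathbf{u}(n)$, then solve that recursion and check that exactly one solution (up to scaling) obeys the summability condition $\sum_{n\ge0}\rho^{-2n}u_k(n)<\infty$. Dividing \eqref{e6} by $\rho^{2m}$ gives, for $n\ge2$,
\begin{equation*}
\mathbf{C}_{0,0}\mathbf{u}(n)+\mathbf{A}_{0,-1}\mathbf{u}(n+1)+\rho^{-2}\mathbf{A}_{1,-1}\mathbf{u}(n+1)+\rho^{2}\mathbf{A}_{-1,1}\mathbf{u}(n-1)=\mathbf{0},
\end{equation*}
and similarly \eqref{e4} and \eqref{e5} become the $n=0$ and $n=1$ boundary relations
\begin{equation*}
\mathbf{C}_{0,0}\mathbf{u}(0)+(\mathbf{A}_{0,-1}+\rho^{2}\mathbf{A}_{1,-1})\mathbf{u}(1)=\mathbf{0},\qquad
\mathbf{C}_{0,0}\mathbf{u}(1)+(\mathbf{A}_{0,-1}+\rho^{-2}\mathbf{A}_{1,-1})\mathbf{u}(2)+2\rho^{2}\mathbf{A}_{-1,1}\mathbf{u}(0)+\mathbf{A}_{0,1}\mathbf{u}(0)=\mathbf{0}.
\end{equation*}
(Here I have used $\mathbf{q}(m-1,n)=\rho^{-2}\mathbf{q}(m,n)$ and $\mathbf{q}(m+1,n)=\rho^{2}\mathbf{q}(m,n)$; the factor $2$ in the $n=0\to1$ equation comes directly from \eqref{e5}.) Because the shift matrices are very sparse — $\mathbf{A}_{0,-1}=\mathbf{A}_{-1,1}=\bigl(\begin{smallmatrix}0&0\\\alpha&0\end{smallmatrix}\bigr)$ and $\mathbf{A}_{1,-1}=\mathbf{A}_{0,1}=\bigl(\begin{smallmatrix}0&0\\0&\lambda\end{smallmatrix}\bigr)$ — each vector equation couples the scalar sequences $u_0(n)$ and $u_1(n)$ in a controlled way: the first scalar row of every equation involves only $u_0$ and $u_1$ at level $n$ (through $\mathbf{C}_{0,0}$), while the second scalar row propagates information between consecutive levels.

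Concretely, I would use the first scalar component of the $n\ge2$ relations to express $u_1(n)$ algebraically in terms of $u_0(n)$ (from $\mathbf{C}_{0,0}=\mathbf{A}_{0,0}-2\mathbf{H}$, the top row reads $-(\lambda+2\alpha)u_0(n)+\mu u_1(n)=0$, so $u_1(n)=\tfrac{\lambda+2\alpha}{\mu}u_0(n)$ for $n\ge2$, and one gets analogous closed relations at $n=0,1$ from the boundary equations). Substituting these into the second scalar component then yields a scalar second-order linear recursion in $u_0(n)$ alone of the shape $a\,u_0(n+1)+b\,u_0(n)+c\,u_0(n-1)=0$ with constant coefficients $a,b,c$ built from $\lambda,\alpha,\mu,\rho$. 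Solving the characteristic equation $a\theta^{2}+b\theta+c=0$ gives two roots $\theta_{\pm}$; the general solution is $u_0(n)=C_{+}\theta_{+}^{\,n}+C_{-}\theta_{-}^{\,n}$ for $n$ large, and the boundary equations at $n=0,1$ together with the requirement that the whole thing be a genuine solution of the system pin down the ratio $C_{+}:C_{-}$. The summability condition $\sum_n\rho^{-2n}u_k(n)<\infty$ forces us to keep only the root with $|\theta|<\rho^{2}$ (and to discard the branch that would blow up), which — provided exactly one root satisfies this, a point to be verified from $\rho<1$ — leaves a one-parameter family, i.e. uniqueness up to a multiplicative constant. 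I expect the characteristic root relevant here to turn out to be $\theta=\rho^{2}$ itself is excluded, and the admissible one to be something like $\theta=\lambda/(\lambda+2\alpha)\cdot(\text{something})$, but the exact value is a routine computation I will not carry out in the sketch.

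The main obstacle is twofold. First, handling the two exceptional rows $n=0$ and $n=1$ cleanly: these do not fit the generic recursion (the $n=0$ equation has no "$n-1$" term and the $n=1$ equation carries the anomalous factor $2$ and the extra $\mathbf{A}_{0,1}\mathbf{u}(0)$ term), so one must check by hand that the large-$n$ solution, continued down through $n=1$ and $n=0$, can be made to satisfy these two boundary vector equations simultaneously — this is where one either gets a consistent one-parameter family or a contradiction, and it is the crux of both existence and uniqueness. Second, the discriminant/root-location analysis: one has to prove that under $\rho<1$ the characteristic polynomial has exactly one root $\theta$ with $0<\theta<\rho^{2}$ so that the tail is summable against $\rho^{-2n}$, and that this root is positive so the resulting $\mathbf{u}(n)$ has nonnegative components. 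I would verify root location by evaluating the characteristic polynomial at $\theta=0$ and $\theta=\rho^{2}$ and using the sign change (together with the known stationarity/stability facts, e.g. $\mathbb{P}(C=1)=\lambda/\mu$ from Proposition \ref{prop10}), which is standard once the coefficients are written out. Everything else — the explicit form of $\mathbf{u}(n)$, the constant — is bookkeeping.
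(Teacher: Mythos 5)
Your proposal is correct in outline, but it takes a genuinely different route from the paper. The paper's proof is the standard ``modified model'' device of the compensation method: bend the vertical boundary so that the state space becomes invariant under translation along lines $2m+n=\text{const}$; spatial homogeneity of this bent process then forces its equilibrium distribution to be of the form $\hat{\mathbf{q}}_{m,n}=\gamma^{m}\hat{\mathbf{q}}_{0,n}$, summing over anti-diagonal levels and applying the level-balance equation identifies $\gamma=\rho^{2}$, and uniqueness is inherited from the uniqueness of the stationary distribution of the modified model. You instead substitute the ansatz $\rho^{2m}\mathbf{u}(n)$ directly, exploit the sparsity of the coefficient matrices — the first scalar row of each of \eqref{e4}--\eqref{e6} gives $u_{1}(n)=\tfrac{\lambda+2\alpha}{\mu}u_{0}(n)$ for all $n\geq0$ — and reduce the second row to a constant-coefficient second-order recursion in $u_{0}(n)$; carrying out the algebra, the recursion is $\tfrac{\rho+2}{\rho}u_{0}(n+1)-2(1+\rho)u_{0}(n)+\rho^{2}u_{0}(n-1)=0$, with characteristic roots $\theta_{+}=\rho$ and $\theta_{-}=\rho^{2}/(\rho+2)$; the latter is exactly $\delta_{0}$ from Step~2a (equivalently, these are the two $\delta$-roots of \eqref{t2} at $\gamma=\rho^{2}$). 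Your route is more elementary and makes the eventual $\delta_{0}$ visible at this stage, whereas the paper's route has the structural merit of explaining why $\gamma=\rho^{2}$ is the right geometric rate without postulating it.

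Two small corrections. First, in your displayed $n=0$ relation the coefficient of $\mathbf{A}_{1,-1}\mathbf{u}(1)$ should be $\rho^{-2}$, not $\rho^{2}$ (it comes from $\mathbf{q}(m-1,1)=\rho^{-2}\rho^{2m}\mathbf{u}(1)$); the substitution rules you state immediately after are correct, so this is just a slip. Second, your guesses at the root values are off: the discarded root is $\theta_{+}=\rho$ (not $\rho^{2}$) and the kept one is $\theta_{-}=\rho^{2}/(\rho+2)$. Also worth noting: if you actually carry the computation through, the two boundary constraints at $n=0$ and $n=1$ already force $C_{+}=0$ (the $n=1$ equation, after using the $n=0$ equation, reduces to $2\rho C_{+}=0$), so summability is then automatic rather than a separate selection step. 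This makes the ``crux'' you flag — consistency of the continued solution with the two exceptional rows — come out cleanly, and the result is indeed a one-parameter family, i.e., uniqueness up to a multiplicative constant, as claimed.
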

\begin{proof}
 We consider a modified model, which is closely related to the original one described by $\tilde{X}(t)$ and that is expected to have the same asymptotic behavior. This modified model is considered on a slightly different grid, namely $\{(m,n,k):m\geq0,n\geq0,k=0,1\}\cup\{(m,n,k):m<0,2m+n\geq 0,k=0,1\}$.
 
 In the interior and on the horizontal boundary, the modified model has the same transition rates as the original model. A characteristic feature of the modified model is that its balance equations for $2m+n = 0$ are exactly the same as the ones in the interior (i.e., the modified model has no ``vertical'' boundary equations) and both models have the same stability region. Therefore, the balance equations for the modified model are given by \eqref{e4}-\eqref{e6} for all $2m+n\geq0$, $m\in \mathbb{Z}$ with only the equation for state $(0,0,k)$, $k=0,1,$ being different due to the incoming rates from the states with $2m+n=0$, $m\in \mathbb{Z}$.
 
 Observe that the modified model, restricted to an area of the form $\{(m,n,k):\, 2m\geq m_0-n,n\geq 0,m_0=1,2,\ldots,k=0,1\}$ embarked by a line parallel to the $2m+n=0$ axis, yields the exact same process. Hence, we can conclude that the equilibrium distribution of the modified model, say $\hat{\mathbf{q}}_{m,n}:=(\hat{q}_{m,n}(0),\hat{q}_{m,n}(1))^{T}$, satisfies
$\hat{\mathbf{q}}_{m+1,n}=\gamma \hat{\mathbf{q}}_{m,n},\ 2m\geq-n,\ n\geq 0$,
 and therefore\vspace{-0.1in}
 \begin{equation}
 \begin{array}{c}
  \hat{\mathbf{q}}_{m,n}=\gamma^m\hat{\mathbf{q}}_{0,n},\ 2m\geq-n,\ n\geq 0.\vspace{-0.1in}
 \end{array}
 \label{eq.geometric_p(m,n)_1}
 \end{equation}
We further observe that $\sum_{n=0}^\infty\hat{q}_{-n,n}(k)=\sum_{n=0}^\infty \gamma^{-n}\hat{q}_{0,n}(h)<1$. To determine the term $\gamma$ we consider levels of the form $(L,k)=\{(m,n,k),:2m+n=L\}$, $k=0,1$ and let $\hat{\mathbf{q}}_L=\sum_{2m+n=L}\hat{\mathbf{q}}_{m,n}$. The balance equations among the levels are:
 \begin{align}
 \mathbf{C}_{0,0}\hat{\mathbf{q}}_{L}+\mathbf{A}_{1,-1}\hat{\mathbf{q}}_{L-1}+2\mathbf{A}_{0,-1}\hat{\mathbf{q}}_{L+1}=0,\ L\geq1,\vspace{-0.1in}\label{26}
 \end{align}
Moreover, equation \eqref{eq.geometric_p(m,n)_1} yields\vspace{-0.1in}
 \begin{equation}
 \begin{array}{c}
 \hat{\mathbf{q}}_{L+1}=\sum_{2k+l=L+1}\gamma^k \hat{\mathbf{q}}_{0,n}=\gamma\sum_{2k+l=L-1}\gamma^k \hat{\mathbf{q}}_{0,n} =\gamma\hat{\mathbf{q}}_{L-1}.\vspace{-0.1in}
 \end{array}\label{31}
 \end{equation}
 Substituting \eqref{31} into \eqref{26} yields
$ \hat{\mathbf{q}}_{L+1}=-[\gamma(\mathbf{A}_{1,-1}+2\gamma\mathbf{A}_{0,-1})^{-1}\mathbf{C}_{0,0}]\hat{\mathbf{q}}_{L}$.
 Combining \eqref{31} with \eqref{26} with $\gamma=\rho^{2}$ yields
\begin{equation*}
\begin{array}{c}
det(\rho\mathbf{C}_{0,0}+\mathbf{A}_{1,-1}+\rho^{2}2\mathbf{A}_{0,-1})=0\Leftrightarrow 2\alpha\mu(1-\rho)(\rho-\frac{\lambda(\lambda+2\alpha)}{2\alpha\mu})=0,\vspace{-0.1in}
\end{array}
\end{equation*}
which implies that indeed $\gamma=\rho^{2}$.
 
Thus, it is shown that the equilibrium distribution of the modified model has a product-form solution which is unique up to a positive multiplicative constant. Returning to the original process $X(t)$, we immediately assume that the solution of \eqref{e4}-\eqref{e6} is identical to the expression for the modified model as given in \eqref{eq_hat_p-geom}. Moreover, the above analysis implies that this product-form is unique, since the equilibrium distribution of the modified model is unique. 
\end{proof}\vspace{-0.3in}
\section{The compensation method}\label{sec:compe}
The CM attempts to solve the equilibrium equations by linear combination of product forms. To cope with this task, we first characterize a sufficiently rich basis (that contains uncountably many elements) of product form solutions that satisfy the equilibrium equations \eqref{e6}. Then, this basis is used to construct a linear combination that also satisfies the equilibrium equations corresponding to the boundary states. CM appropriately selects the right elements, and consists of adding on terms so as to compensate alternately for the error on the vertical boundary, and for the error on the horizontal boundary. More details are given in the following subsections. \vspace{-0.15in}
\subsection{Step 1: Construction of the basis of product forms}
Characterize the set of product forms $\gamma^{m}\delta^{n}\boldsymbol{\theta}$ that satisfy the equilibrium equations at the interior of the state space \eqref{e6}. This is presented in Lemmas \ref{lem1}, \ref{lem2}.\vspace{-0.1in}
\begin{lemma}\label{lem1}
The product form $\mathbf{q}(m,n)=\gamma^{m}\delta^{n}\boldsymbol{\theta}$, $m\geq0$, $n\geq1$, $\boldsymbol{\theta}:=(\theta_{0},\theta_{1})^{T}$, with $\theta_{1}=\theta_{0}(\lambda+2\alpha)/\mu$, satisfies \eqref{e6} if\vspace{-0.1in}
\begin{eqnarray}
\begin{array}{rl}
\mathbf{D}(\gamma,\delta)\boldsymbol{\theta}=\boldsymbol{0}\Rightarrow(\gamma\delta\mathbf{C}_{0,0}+\gamma\delta^{2}\mathbf{A}_{0,-1}+\gamma^{2}\mathbf{A}_{-1,1}+\delta^{2}\mathbf{A}_{1,-1})\boldsymbol{\theta}=&\boldsymbol{0},\label{t1}\vspace{-0.1in}
\end{array}
\end{eqnarray}
\end{lemma}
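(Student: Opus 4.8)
The plan is to verify the claim by direct substitution of the candidate product form into the interior balance equations \eqref{e6}, and then to read off the two scalar conditions that result.

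First I would substitute $\mathbf{q}(m,n)=\gamma^{m}\delta^{n}\boldsymbol{\theta}$ into \eqref{e6}. Since that equation is imposed only for $m\geq1$ and $n\geq2$, all the shifted indices $(m,n+1)$, $(m-1,n+1)$, $(m+1,n-1)$ stay inside the region $m\geq0$, $n\geq1$ on which the product form is posited, and the common factor $\gamma^{m-1}\delta^{n-1}$ is a genuine nonnegative power of $\gamma$ and of $\delta$. Assuming $\gamma\delta\neq0$ (the degenerate case being of no interest), dividing \eqref{e6} by $\gamma^{m-1}\delta^{n-1}$ collapses the whole family of interior equations, indexed by $(m,n)$, into the single matrix identity displayed in \eqref{t1}, namely $\mathbf{D}(\gamma,\delta)\boldsymbol{\theta}=\mathbf{0}$. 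This already establishes the stated sufficient condition.

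Second, I would spell out the two scalar equations hidden in $\mathbf{D}(\gamma,\delta)\boldsymbol{\theta}=\mathbf{0}$. Because $\mathbf{A}_{0,-1}$, $\mathbf{A}_{-1,1}$ and $\mathbf{A}_{1,-1}$ all have vanishing first row, while the first row of $\mathbf{C}_{0,0}$ is $(-\lambda-2\alpha,\ \mu)$ and does not depend on $\gamma,\delta$, the first coordinate of $\mathbf{D}(\gamma,\delta)\boldsymbol{\theta}$ is produced solely by $\gamma\delta\,\mathbf{C}_{0,0}$; hence the first component of $\mathbf{D}(\gamma,\delta)\boldsymbol{\theta}=\mathbf{0}$ reads
\begin{equation*}
\gamma\delta\bigl(-(\lambda+2\alpha)\theta_{0}+\mu\theta_{1}\bigr)=0,
\end{equation*}
i.e.\ $\theta_{1}=\theta_{0}(\lambda+2\alpha)/\mu$ irrespective of $\gamma,\delta$ — exactly the normalization built into the statement. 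The second component of $\mathbf{D}(\gamma,\delta)\boldsymbol{\theta}=\mathbf{0}$ is then the genuine coupling between $\gamma$ and $\delta$, and it will be exploited in Lemma \ref{lem2}.

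There is essentially no hard step here: the lemma is a bookkeeping identity obtained by substitution and cancellation, and the only point that deserves a line of care is checking that the index shifts in \eqref{e6} keep us inside the interior, so that factoring out $\gamma^{m-1}\delta^{n-1}$ is legitimate. The real difficulty is postponed to the next stage, namely describing the admissible locus $\det\mathbf{D}(\gamma,\delta)=0$ and extracting from it the countable family of product forms needed for the compensation construction.
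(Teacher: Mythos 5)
Your proposal is correct and follows essentially the same route as the paper: substitute the product form into \eqref{e6}, observe that the index shifts remain in the interior, factor out $\gamma^{m-1}\delta^{n-1}$, and read off $\mathbf{D}(\gamma,\delta)\boldsymbol{\theta}=\mathbf{0}$. Your extra observation — that the first row of $\mathbf{D}(\gamma,\delta)$ is just $\gamma\delta$ times the first row of $\mathbf{C}_{0,0}$, so the ratio $\theta_{1}/\theta_{0}=(\lambda+2\alpha)/\mu$ drops out independently of $\gamma,\delta$ — is a cleaner justification than the paper's appeal to $\det\mathbf{D}(\gamma,\delta)=0$ and rank, and it directly explains the remark following the lemma that $\boldsymbol{\theta}$ does not vary with the pair $(\gamma,\delta)$.
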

\begin{proof}
The desired result is obtained directly by substituting the product form in \eqref{e6}. For $det(\mathbf{D}(\gamma,\delta))=0$, the matrix $\mathbf{D}(\gamma,\delta)$ has rank equal to 1. From the system of linear equations $\mathbf{D}(\gamma,\delta)\boldsymbol{\theta}=\boldsymbol{0}$, and the form of $\mathbf{D}(\gamma,\delta)$ we derive the relation among $\theta_{0}$, $\theta_{1}$.
\end{proof}

Note that the value of eigenvector $\boldsymbol{\theta}$ is independent of the values of $\gamma$, $\delta$ that satisfy \eqref{t1}. This result simplifies considerably the analysis.
The next step is to determine $\gamma$s and $\delta$s such that $0<|\gamma|<1$, $0<|\delta|<1$ for which there exists a non-zero solution $\boldsymbol{\theta}$ of \eqref{t1}, i.e., $det(\mathbf{D}(\gamma,\delta))=0$. The next lemma gives information about the location of the zeros of $det(\mathbf{D}(\gamma,\delta))=0$.
\begin{lemma}\label{lem2}
$(i)$ For $\rho<1$, and for every fixed $\gamma$ with $|\gamma|\in(0,1)$, the equation $det(\mathbf{D}(\gamma,\delta))=0$ takes the form
\begin{equation}
\begin{array}{c}
\gamma\delta 2(\rho+1)-2\rho\delta^2-\gamma^{2}-\gamma\delta^{2}=0,\vspace{-0.1in}
\end{array}
\label{t2}
\end{equation}
and has exactly one root in the $\delta-$plane such that $0<|\delta|<|\gamma|$. $(ii)$ For $\rho<1$, and for every fixed $\delta$ with $|\delta|\in(0,1)$, the equation (\ref{t2}) has exactly one root in the $\gamma-$plane such that $0<|\gamma|<|\delta|$.\vspace{-0.1in}
\end{lemma}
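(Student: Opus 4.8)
The plan is to treat \eqref{t2} as a quadratic in $\delta$ for fixed $\gamma$ (and, by the symmetry of the model, the two assertions are proved identically after swapping the roles of $\gamma$ and $\delta$, so I only detail part $(i)$). First I would verify the algebraic reduction: expanding $\det(\mathbf{D}(\gamma,\delta))=0$ with the explicit matrices $\mathbf{C}_{0,0}=\mathbf{A}_{0,0}-2\mathbf{H}$, $\mathbf{A}_{0,-1}=\mathbf{A}_{-1,1}$, $\mathbf{A}_{1,-1}=\mathbf{A}_{0,1}$ and dividing out the common nonzero factor (here the factor $\alpha\mu/\ldots$ that accounts for $\rho=\lambda(\lambda+2\alpha)/(2\alpha\mu)$) yields \eqref{t2}; this is a routine but necessary computation and I would present it compactly. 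Rewriting \eqref{t2} as a quadratic in $\delta$,
\begin{equation*}
(2\rho+\gamma)\delta^{2}-2\gamma(\rho+1)\delta+\gamma^{2}=0,
\end{equation*}
the leading coefficient $2\rho+\gamma$ is nonzero for $|\gamma|\in(0,1)$ and $\rho<1$ (it cannot vanish since $|2\rho+\gamma|\ge 2\rho-|\gamma|$ is problematic only near $\gamma=-2\rho$, which lies outside the unit disc when $\rho\ge 1/2$, and for $\rho<1/2$ one checks directly that $\gamma=-2\rho$ is not a degenerate case; alternatively one argues via the product of roots below). So there are exactly two roots $\delta_{1},\delta_{2}$ in the $\delta$-plane, counted with multiplicity.

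The heart of the argument is to show exactly one of these roots satisfies $0<|\delta|<|\gamma|$. The cleanest route is Rouché's theorem on the circle $|\delta|=|\gamma|$: write $f(\delta)=-2\gamma(\rho+1)\delta$ and $g(\delta)=(2\rho+\gamma)\delta^{2}+\gamma^{2}$, and show $|g(\delta)|<|f(\delta)|$ for $|\delta|=|\gamma|$. On that circle $|f(\delta)|=2(\rho+1)|\gamma|^{2}$ while $|g(\delta)|\le |2\rho+\gamma|\,|\gamma|^{2}+|\gamma|^{2}\le(2\rho+|\gamma|+1)|\gamma|^{2}$, and since $|\gamma|<1$ we get $2\rho+|\gamma|+1<2\rho+2\le 2(\rho+1)$, giving the strict inequality (the inequality is strict because $|\gamma|<1$ strictly). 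Hence $f+g$ and $f$ have the same number of zeros inside $|\delta|<|\gamma|$, namely one; and by construction this root is nonzero because $\delta=0$ forces $\gamma^{2}=0$, impossible. The product of the two roots is $\gamma^{2}/(2\rho+\gamma)$, whose modulus is $|\gamma|^{2}/|2\rho+\gamma|$; combined with one root having modulus $<|\gamma|$, the other root has modulus $>|\gamma|/|2\rho+\gamma|\cdot|\gamma|$, but more usefully one can read off that the second root lies in $|\delta|>|\gamma|$, confirming "exactly one." For part $(ii)$, viewing \eqref{t2} as a quadratic in $\gamma$, $\gamma^{2}-2\gamma\delta(\rho+1)+(2\rho+\delta)\delta^{2}=0$ has leading coefficient $1$, and the identical Rouché estimate on $|\gamma|=|\delta|$ (now $|f(\gamma)|=2(\rho+1)|\delta|^{2}$, $|g(\gamma)|\le|\delta|^{2}+(2\rho+|\delta|)|\delta|^{2}=(1+2\rho+|\delta|)|\delta|^{2}<2(\rho+1)|\delta|^{2}$) gives exactly one root with $|\gamma|<|\delta|$.

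The main obstacle I anticipate is making the Rouché comparison fully rigorous on the whole circle $|\delta|=|\gamma|$: the bound $|2\rho+\gamma|\le 2\rho+|\gamma|$ is fine, but one must ensure the resulting inequality $2\rho+|\gamma|+1<2(\rho+1)$ is strict, which needs $|\gamma|<1$ strictly — exactly the hypothesis — and one should also rule out the possibility that a root lies exactly on $|\delta|=|\gamma|$ (ruled out by the strict inequality, since $f+g$ has no zeros on a circle where $|g|<|f|$). A secondary subtlety is the degenerate-leading-coefficient case $\gamma=-2\rho$ when $\rho<1/2$: there the quadratic in $\delta$ degenerates to a linear equation $-2\gamma(\rho+1)\delta+\gamma^{2}=0$ with the single root $\delta=\gamma/(2(\rho+1))$, whose modulus is $|\gamma|/(2(\rho+1))<|\gamma|$ since $\rho+1>1/2$; so the statement "exactly one root with $0<|\delta|<|\gamma|$" still holds (the "second root" has escaped to infinity). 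I would handle this as a brief special case. The symmetry remark $\mathbf{A}_{0,-1}=\mathbf{A}_{-1,1}$, $\mathbf{A}_{1,-1}=\mathbf{A}_{0,1}$ makes the coefficient structure of \eqref{t2} symmetric under $\gamma\leftrightarrow\delta$ up to the labeling, which is what lets part $(ii)$ follow with no new work.
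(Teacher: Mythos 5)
Your approach---Rouch\'e's theorem applied on the circle $|\delta|=|\gamma|$ (resp.\ $|\gamma|=|\delta|$)---is exactly the technique the paper defers to in \cite{ad0}, so the method matches the intended proof, and your Part~$(i)$ argument is correct and complete. (Incidentally, the separate discussion of the degenerate leading coefficient $2\rho+\gamma=0$ is unnecessary: Rouch\'e compares $f+g$ with $f$ regardless of whether $f+g$ is a genuine quadratic, so the case $\gamma=-2\rho$, $\rho<1/2$, is covered by the same count.) You also correctly observe that the reduction from $\det\mathbf{D}(\gamma,\delta)=0$ to \eqref{t2} is a direct computation after factoring out $\alpha\mu\gamma\delta$ and using $\lambda(\lambda+2\alpha)=2\alpha\mu\rho$.

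Part~$(ii)$ as written, however, contains a genuine algebraic error, and the ``swap $\gamma\leftrightarrow\delta$ by symmetry'' shortcut you invoke does not exist: the cross term $-\gamma\delta^{2}$ in \eqref{t2} is not symmetric under $\gamma\leftrightarrow\delta$, so Part~$(ii)$ requires its own bookkeeping. Collecting \eqref{t2} by powers of $\gamma$ gives
\begin{equation*}
\gamma^{2}+\bigl(\delta^{2}-2(\rho+1)\delta\bigr)\gamma+2\rho\delta^{2}=0,
\end{equation*}
whereas you wrote $\gamma^{2}-2(\rho+1)\delta\gamma+(2\rho+\delta)\delta^{2}=0$, i.e.\ you transplanted the $-\gamma\delta^{2}$ term into the constant as $\delta^{3}$ instead of leaving it in the linear-in-$\gamma$ coefficient. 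The Rouch\'e estimate still goes through once corrected: take $f(\gamma)=(\delta^{2}-2(\rho+1)\delta)\gamma$ and $g(\gamma)=\gamma^{2}+2\rho\delta^{2}$; on $|\gamma|=|\delta|$ one has $|f|=|\delta|^{2}\,|\delta-2(\rho+1)|\geq|\delta|^{2}\bigl(2(\rho+1)-|\delta|\bigr)>(2\rho+1)|\delta|^{2}\geq|g|$, the strict inequality coming from $|\delta|<1$, and $f$ has its single zero at $\gamma=0$ inside the disc, which is not a root of $f+g$ since $f(0)+g(0)=2\rho\delta^{2}\neq 0$. So the conclusion survives, but in a lemma whose entire content is this elementary bookkeeping, the misstatement of the quadratic needs to be fixed, and the appeal to symmetry removed.
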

\begin{proof}
Starting from $det(\mathbf{D}(\gamma,\delta))=0$, we obtain \eqref{t2}. Surprisingly, the form of \eqref{t2} is exactly the same as the one of equation (8) in Lemma 1 of the seminal paper \cite{ad0}, and thus, the assertions $i)$, $ii)$ can be proven similarly, so further details are omitted.
\end{proof}\vspace{-0.2in}
\subsection{Step 2: Construction of the formal solution}
We now focus on constructing a formal solution to the equilibrium equations as a linear combination of the elements of the rich basis obtained in Step 1. This is composed in four steps as follows.\vspace{-0.1in}
\paragraph{Step 2a) Initial solution:} The construction of the starting solution is crucial, and starts with a suitable initial term satisfying the interior of the state space and the horizontal boundary equilibrium equations. In Lemma \ref{lem0}, we shown that $\gamma_{0}=\rho^{2}$, and from \eqref{t2} we obtain $\delta_{0}=\frac{\rho^{2}}{2+\rho}$, such that $|\delta_{0}|<|\gamma_{0}|$. Note that this pair is the unique feasible starting pair satisfying the horizontal boundary and the interior equilibrium equations. Moreover, such a behaviour it is also shown in Lemma \ref{lem0}. It is easy to see that no feasible starting pair that satisfies the interior and the vertical boundary equilibrium equations do exists. In Lemma \ref{plm} we specify the form of the vector $\mathbf{u}(n)$. Its proof follows by the substitution of \eqref{init} in \eqref{e4}-\eqref{e6}.\vspace{-0.1in}
\begin{lemma}\label{plm}
The solution
\begin{eqnarray}
\mathbf{q}(m,n)=&\left\{\begin{array}{ll}
h_{0}\gamma_{0}^{m}\delta_{0}^{n}\boldsymbol{\theta},&m\geq0,n\geq1,\\
\gamma_{0}^{m}\boldsymbol{\xi},&m\geq1,n=0,
\end{array}\right.\label{init}\\
\boldsymbol{\xi}=&-\frac{h_{0}}{\gamma_{0}}\mathbf{C}_{0,0}^{-1}[\mathbf{A}_{1,-1}+\gamma_{0}\mathbf{A}_{0,-1}]\delta_{0}\boldsymbol{\theta},\vspace{-0.2in}\label{xi}
\end{eqnarray}
satisfies the equilibrium equations \eqref{e4}-\eqref{e6}. \vspace{-0.1in}
\end{lemma}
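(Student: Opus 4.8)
The plan is to verify directly that the proposed solution \eqref{init}, with $\boldsymbol{\xi}$ given by \eqref{xi}, satisfies each of the equilibrium equations \eqref{e4}, \eqref{e5}, \eqref{e6}. Since Lemma \ref{lem1} already guarantees that $\mathbf{q}(m,n)=h_{0}\gamma_{0}^{m}\delta_{0}^{n}\boldsymbol{\theta}$ satisfies \eqref{e6} for $m\geq 1,n\geq 2$ (because $(\gamma_{0},\delta_{0})$ is a root of $\det(\mathbf{D}(\gamma,\delta))=0$ and $\boldsymbol{\theta}$ has the stated form), the work is concentrated on the two boundary rows $n=0$ (equation \eqref{e4}) and $n=1$ (equation \eqref{e5}), where the purely geometric form on $n\geq 1$ must be matched against the separately defined vector $\boldsymbol{\xi}$ on $n=0$.

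First I would substitute \eqref{init} into \eqref{e5}: here $\mathbf{q}(m,n)$ for $n=1$ and $n=2$ are $h_{0}\gamma_{0}^{m}\delta_{0}\boldsymbol{\theta}$ and $h_{0}\gamma_{0}^{m}\delta_{0}^{2}\boldsymbol{\theta}$, the term $\mathbf{q}(m+1,0)=\gamma_{0}^{m+1}\boldsymbol{\xi}$, and $\mathbf{q}(m,0)=\gamma_{0}^{m}\boldsymbol{\xi}$, $\mathbf{q}(m-1,2)=h_{0}\gamma_{0}^{m-1}\delta_{0}^{2}\boldsymbol{\theta}$. After dividing by $\gamma_{0}^{m-1}$, \eqref{e5} becomes a linear relation that, using the definition \eqref{xi} of $\boldsymbol{\xi}$ to eliminate $\boldsymbol{\xi}$, should reduce to $h_0\delta_0\big(\gamma_0\delta_0\mathbf{C}_{0,0}+\gamma_0\delta_0^2\mathbf{A}_{0,-1}+\gamma_0^2\mathbf{A}_{-1,1}+\delta_0^2\mathbf{A}_{1,-1}+\text{(terms from }\mathbf{A}_{0,1}\mathbf{q}(m,0)\text{ and the factor }2\mathbf{A}_{-1,1}\text{)}\big)\boldsymbol{\theta}$; I would check that the extra contributions from $\mathbf{A}_{0,1}\boldsymbol{\xi}$ and from the coefficient $2$ in front of $\mathbf{A}_{-1,1}\mathbf{q}(m+1,0)$ conspire (via the special structure of $\boldsymbol{\xi}$, which lies in the range of $\mathbf{C}_{0,0}^{-1}[\mathbf{A}_{1,-1}+\gamma_0\mathbf{A}_{0,-1}]\boldsymbol{\theta}$) to yield exactly $\mathbf{D}(\gamma_{0},\delta_{0})\boldsymbol{\theta}=\mathbf{0}$. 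The same style of substitution handles \eqref{e4}: plugging in $\mathbf{q}(m,0)=\gamma_0^m\boldsymbol{\xi}$, $\mathbf{q}(m-1,0)=\gamma_0^{m-1}\boldsymbol{\xi}$ when $m\geq 2$ (and $\mathbf{q}(0,0)=\mathbf{q}(0,0)$ when $m=1$, which is handled by \eqref{e1}--\eqref{e2} separately, so \eqref{e4} for $m=1$ may need the explicit value of $\mathbf{q}(0,0)$), together with $\mathbf{q}(m,1)=h_0\gamma_0^m\delta_0\boldsymbol{\theta}$, $\mathbf{q}(m-1,1)=h_0\gamma_0^{m-1}\delta_0\boldsymbol{\theta}$; after dividing out $\gamma_0^{m-1}$ this is precisely the defining relation \eqref{xi} for $\boldsymbol{\xi}$ (rearranged), so \eqref{e4} holds by construction for $m\geq 2$.

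The remaining point is to confirm that $\boldsymbol{\xi}$ is well defined, i.e. that $\mathbf{C}_{0,0}$ is invertible for $\rho<1$ — this follows since $\det\mathbf{C}_{0,0}=\det(\mathbf{A}_{0,0}-2\mathbf{H})=(\lambda+2\alpha)(\lambda+\mu)-\lambda\mu=\lambda^2+2\alpha\lambda+2\alpha\mu\neq 0$ — and to record that $\gamma_0=\rho^2<1$ and $\delta_0=\rho^2/(2+\rho)$ with $|\delta_0|<|\gamma_0|$, so the summability conditions $\sum_n \rho^{-2n}u_k(n)<\infty$ hold because $\delta_0/\gamma_0=1/(2+\rho)<1$ forces $\rho^{-2n}\delta_0^n=(\delta_0/\gamma_0)^n\to 0$ geometrically. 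I expect the main obstacle to be bookkeeping in the $n=1$ equation \eqref{e5}: the coefficient $2$ multiplying $\mathbf{A}_{-1,1}\mathbf{q}(m+1,0)$ and the presence of the $\mathbf{A}_{0,1}\mathbf{q}(m,0)$ term mean that the cancellation is not the ``bare'' interior identity but a modified one, and one must use the precise expression \eqref{xi} (not just that $\boldsymbol{\xi}$ is proportional to $\boldsymbol{\theta}$ in some components) to see that everything collapses to $\mathbf{D}(\gamma_0,\delta_0)\boldsymbol{\theta}=\mathbf{0}$; verifying this entails one careful $2\times 2$ matrix computation using the explicit forms of $\mathbf{A}_{0,1},\mathbf{A}_{-1,1},\mathbf{A}_{1,-1},\mathbf{C}_{0,0}$ and the relation $\theta_1=\theta_0(\lambda+2\alpha)/\mu$.
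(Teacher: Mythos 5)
Your proposal takes the same route as the paper, which simply states that the lemma follows by substituting \eqref{init} into \eqref{e4}--\eqref{e6}; your unpacking of that substitution — \eqref{e6} via Lemma \ref{lem1}, \eqref{e4} giving exactly the defining relation \eqref{xi}, and \eqref{e5} collapsing after inserting \eqref{xi} and using $\mathbf{D}(\gamma_0,\delta_0)\boldsymbol{\theta}=\mathbf{0}$ — is the right content. One correction: equation \eqref{e4} involves $\mathbf{q}(m,0)$, $\mathbf{q}(m,1)$ and $\mathbf{q}(m-1,1)$, not $\mathbf{q}(m-1,0)$, so there is no appeal to $\mathbf{q}(0,0)$ when $m=1$; \eqref{e4} holds for all $m\geq 1$ directly from \eqref{xi}, and your caveat about handling $m=1$ separately is unnecessary. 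Also, a small simplification you can exploit: a short $2\times 2$ computation using $\theta_1=\theta_0(\lambda+2\alpha)/\mu$, $\gamma_0=\rho^2$, $\delta_0=\rho^2/(2+\rho)$ shows $\mathbf{C}_{0,0}^{-1}[\mathbf{A}_{1,-1}+\gamma_0\mathbf{A}_{0,-1}]\boldsymbol{\theta}$ is a scalar multiple of $\boldsymbol{\theta}$, so $\boldsymbol{\xi}=\frac{h_0\rho}{2(1+\rho)}\boldsymbol{\theta}$ exactly (not merely proportional ``in some components''); with this the verification of \eqref{e5} reduces to the scalar identity $\gamma_0(-h_0+2c)+2c\rho=0$ with $c=\frac{h_0\rho}{2(1+\rho)}$, which holds trivially.
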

\paragraph{Step 2b) Compensation on the vertical boundary:} It is easily seen that the solution in \eqref{init} does not satisfy the vertical boundary equation \eqref{e3}. To compensate for this error we add a new term such that the sum of two terms satisfies \eqref{e3}, \eqref{e6}. Thus, we assume that $h_{0}\gamma_{0}^{m}\delta_{0}^{n}\boldsymbol{\theta}+c_{1}\gamma^{m}\delta^{n}\boldsymbol{\theta}$ satisfies both \eqref{e3}, \eqref{e6}. Substituting this form in \eqref{e3} yields
\begin{equation}
\begin{array}{c}
[h_{0}V(\gamma_{0},\delta_{0}+c_{1}V(\gamma,\delta)]\boldsymbol{\theta}=\mathbf{0},\,n\geq2,\vspace{-0.1in}
\end{array}
\label{xc}
\end{equation}
where $V(\gamma,\delta)=\mathbf{B}_{0,0}\delta+\mathbf{A}_{-1,1}\gamma+\mathbf{A}_{0,-1}\delta^{2}$. Hence, $\delta=\delta_{0}$, and for such $\delta_{0}$, we obtain from \eqref{t1} $\gamma:=\gamma_{1}$, such that $|\gamma_{1}|<|\delta_{0}|<|\gamma_{0}|$, so that $(\gamma_{1},\delta_{0})$ satisfies \eqref{e3}. Thus, the solution $h_{0}\gamma_{0}^{m}\delta_{0}^{n}\boldsymbol{\theta}+c_{1}\gamma_{1}^{m}\delta_{0}^{n}\boldsymbol{\theta}$ satisfies \eqref{e3}. The constant $c_{1}$ can be obtained by substituting it in \eqref{e3}, or equivalently, by using \eqref{xc} and the fact that $\gamma$, $\delta$ are the roots of \eqref{t2}. This procedure yields after some algebra
\begin{equation}
\begin{array}{c}
c_{1}=-\frac{\gamma_{1}-\delta_{0}\left(\frac{\lambda+\mu}{\mu}\right)}{\gamma_{0}-\delta_{0}\left(\frac{\lambda+\mu}{\mu}\right)}h_{0}.\vspace{-0.1in}
\end{array}
\label{c}
\end{equation}
%
\paragraph{Step 2c) Compensation on the horizontal boundary:} Adding the new term, we violate the horizontal boundary equations \eqref{e4}, \eqref{e5}. We compensate for this error by adding a product form generated by the pair $(\gamma_{1},\delta_{1})$, such that $|\delta_{1}|<|\gamma_{1}|$. The new solution is:
\small{\begin{equation}
\begin{array}{c}
\mathbf{q}(m,n)=\left\{\begin{array}{ll}
h_{0}\gamma_{0}^{m}\delta_{0}^{n}\boldsymbol{\theta}+c_{1}\gamma_{1}^{m}\delta_{0}^{n}\boldsymbol{\theta}+h_{1}\gamma_{1}^{m}\delta_{1}^{n}\boldsymbol{\theta},&m,n\geq1,\\
\gamma_{0}^{m}\boldsymbol{\xi}+\gamma_{1}^{m}\boldsymbol{\xi}_{1},&m\geq1,n=0,
\end{array}\right.\vspace{-0.1in}
\end{array}
\label{hc}
\end{equation}}
where $h_{1}$, $\boldsymbol{\xi}_{1}$ are obtained such that to satisfy \eqref{e4}-\eqref{e6}. In particular, by substituting \eqref{hc} to \eqref{e4} yields
\small{\begin{equation}
\begin{array}{c}
\boldsymbol{\xi}_{1}=-\frac{1}{\gamma_{1}}[\gamma_{0}\boldsymbol{\xi}+\mathbf{C}_{0,0}^{-1}(\mathbf{A}_{1,-1}+\gamma_{1}\mathbf{A}_{0,-1})(c_{1}\delta_{0}+h_{1}\delta_{1})\boldsymbol{\theta}].
\end{array}
\label{wxi}
\end{equation}}
Now, note that \eqref{e4} reads $\mathbf{q}(m,0)=-\mathbf{C}_{0,0}^{-1}[\mathbf{A}_{1,-1}\mathbf{q}(m-1,1)+\mathbf{A}_{0,-1}\mathbf{q}(m,1)]$, $m\geq1$. Substituting back to \eqref{e5} yields
\small{\begin{equation}
\begin{array}{l}
[\mathbf{C}_{0,0}-(\mathbf{A}_{0,1}\mathbf{C}_{0,0}^{-1}\mathbf{A}_{0,-1}+2\mathbf{A}_{-1,1}\mathbf{C}_{0,0}^{-1}\mathbf{A}_{1,-1})]\mathbf{q}(m,1)+\mathbf{A}_{1,-1}\mathbf{q}(m-1,2)+\mathbf{A}_{0,-1}\mathbf{q}(m,2)\\
-\mathbf{A}_{0,1}\mathbf{C}_{0,0}^{-1}\mathbf{A}_{1,-1}\mathbf{q}(m-1,1)-2\mathbf{A}_{-1,1}\mathbf{C}_{0,0}^{-1}\mathbf{A}_{0,-1}\mathbf{q}(m+1,1)=\mathbf{0},\,m\geq1.
\end{array}\label{hc2}
\end{equation}}
Substituting \eqref{hc} in \eqref{hc2} yields after tedious algebra that $h_{1}$ should satisfy
\begin{equation}
\begin{array}{l}
[h_{1}L(\gamma_{1},\delta_{1})+c_{1}L(\gamma_{1},\delta_{0})]\boldsymbol{\theta}=\mathbf{0},\\
L(\gamma,\delta):=\delta\left[\gamma[\mathbf{C}_{0,0}-(\mathbf{A}_{0,1}\mathbf{C}_{0,0}^{-1}\mathbf{A}_{0,-1}+2\mathbf{A}_{-1,1}\mathbf{C}_{0,0}^{-1}\mathbf{A}_{1,-1})]\right.\\
\left.+\delta\mathbf{A}_{1,-1}+\gamma\delta\mathbf{A}_{0,-1}-\delta\mathbf{A}_{0,1}\mathbf{C}_{0,0}^{-1}\mathbf{A}_{1,-1}-\gamma^{2}2\mathbf{A}_{-1,1}\mathbf{C}_{0,0}^{-1}\mathbf{A}_{0,-1}\right].\end{array}
\label{hor2}
\end{equation}

Thus, \eqref{hor2} implies that $det(h_{1}L(\gamma_{1},\delta_{1})+c_{1}L(\gamma_{1},\delta_{0}))=0$, and having in mind \eqref{t2}, we obtain after some algebra that
\small{\begin{equation}
\begin{array}{c}
h_{1}=-\frac{(\rho+\gamma_{1})/\delta_{1}-(1+\rho)}{(\rho+\gamma_{1})/\delta_{0}-(1+\rho)}c_{1}.
\end{array}
\label{h}
\end{equation}}\vspace{-0.25in}
\paragraph{Step 2d) Formal construction of the solution:} By constantly repeating steps 2b), 2c) as above, we construct the entire formal series as shown below.\vspace{-0.1in}
\begin{theorem}\label{theorem}
For $\rho<1$,
\small{\begin{equation}
\begin{array}{rl}
\mathbf{q}(m,n)\propto&\sum_{i=0}^{\infty}(h_{i}\gamma_{i}^{m}+c_{i+1}\gamma_{i+1}^{m})\delta_{i}^{n}\boldsymbol{\theta},\,\,({\text{pairs with the same $\delta$-term})},\,m\geq0,n\geq1,\\
\propto	&(h_{0}\gamma_{0}^{m}\delta_{0}^{n}+\sum_{i=0}^{\infty}(h_{i+1}\delta_{i}^{n}+c_{i+1}\delta_{i+1}^{n})\gamma_{i+1}^{m}\boldsymbol{\theta},\,\,({\text{pairs with the same $\gamma$-term})}\,m\geq0,n\geq1,\\
\mathbf{q}(m,0)\propto&(\gamma_{0}^{m}\boldsymbol{\xi}+\sum_{i=1}^{\infty}\gamma_{i}^{m}\boldsymbol{\xi}_{i}),\,m\geq1,\vspace{-0.1in}
\end{array}
\label{form}
\end{equation}}
and $\mathbf{q}(0,0)=-\mathbf{A}_{0,0}^{-1}\mathbf{A}_{0,-1}\mathbf{q}(0,1)$, where the symbol ($\propto$) means ``directly proportional". Moreover, $\boldsymbol{\theta}=\theta_{0}(1,\frac{\lambda+2\alpha}{\mu})^{T}$, $\theta_{0}>0$, and the sequences $\{\gamma_{i}\}_{i\in\mathbb{N}}$, $\{\delta_{i}\}_{i\in\mathbb{N}}$, $\{h_{i}\}_{i\in\mathbb{N}}$, $\{c_{i}\}_{i\in\mathbb{N}}$, $\{\boldsymbol{\xi}_{i}\}_{i\in\mathbb{N}}$, are
obtained recursively based on the analysis above.
\end{theorem}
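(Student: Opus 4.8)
The plan is to verify that the formal series in \eqref{form} does indeed satisfy \emph{all} the equilibrium equations \eqref{e1}--\eqref{e6}, by induction on the compensation step, and then to argue that the construction is forced (uniqueness up to a constant). The backbone of the argument is already laid out in Steps 2a)--2c): we have an initial pair $(\gamma_0,\delta_0)$ satisfying \eqref{e4}--\eqref{e6}, and each subsequent term is added precisely to kill the error a previous term produced on one of the two boundaries \eqref{e3} (vertical) and \eqref{e4}--\eqref{e5} (horizontal), while — by Lemma \ref{lem1} — automatically continuing to satisfy \eqref{e6} in the interior. So the first task is to set up the two recursions cleanly: starting from a term $c_i\gamma_i^m\delta_{i-1}^n\boldsymbol{\theta}$ (with $c_0\gamma_0 := h_0\gamma_0$), compensation on the horizontal boundary produces $h_i\gamma_i^m\delta_i^n\boldsymbol{\theta}$ with $\delta_i$ the $\delta$-root of \eqref{t2} associated with $\gamma_i$ satisfying $|\delta_i|<|\gamma_i|$, and ratio $h_i/c_i$ given by the analogue of \eqref{h}; compensation on the vertical boundary then produces $c_{i+1}\gamma_{i+1}^m\delta_i^n\boldsymbol{\theta}$ with $\gamma_{i+1}$ the $\gamma$-root of \eqref{t2} associated with $\delta_i$ satisfying $|\gamma_{i+1}|<|\delta_i|$, and ratio $c_{i+1}/h_i$ given by the analogue of \eqref{c}. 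Simultaneously the horizontal-boundary vectors $\boldsymbol{\xi}_i$ are determined from \eqref{e4} exactly as in \eqref{wxi}. This gives well-defined sequences $\{\gamma_i\},\{\delta_i\},\{h_i\},\{c_i\},\{\boldsymbol{\xi}_i\}$.

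Second, I would establish that the resulting double series converges absolutely on $S$ (so that termwise manipulation is legitimate and the limit is genuinely a solution of the equilibrium equations). Because \eqref{t2} coincides formally with equation (8) of \cite{ad0}, Lemma \ref{lem2} gives the same nesting $|\gamma_{i+1}|<|\delta_i|<|\gamma_i|<\cdots$ of moduli, so $|\gamma_i|,|\delta_i|\to0$ geometrically; combined with the fact that the ratios $|h_i/c_i|$ and $|c_{i+1}/h_i|$ from \eqref{c}, \eqref{h} stay bounded (indeed are $O(1)$ as $\gamma_i,\delta_i\to0$), the tail of the series is dominated by a convergent geometric series, uniformly for $(m,n)$ in any region $m+n\ge N$. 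Near the origin one checks convergence directly from the bound $|\gamma_i|^m|\delta_i|^n\le|\gamma_i|^{\min(m,n)}$ and the summability of $\sum_i |\gamma_i|$. This also shows the normalization constant is finite, so $\rho<1$ is sufficient for stability, as promised in Section \ref{sec:model}.

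Third, with convergence in hand, the verification that \eqref{form} solves \eqref{e6} for all $m\ge1,n\ge2$ is immediate from Lemma \ref{lem1} applied termwise (every pair $(\gamma_i,\delta_i)$ and $(\gamma_{i+1},\delta_i)$ lies on the curve \eqref{t2} and shares the same eigenvector $\boldsymbol{\theta}$, a crucial simplification noted after Lemma \ref{lem1}); the verification of \eqref{e3} is a telescoping argument in which, for each fixed $i$, the pair of terms sharing $\delta_i$ satisfies \eqref{xc}, so the whole sum does; the verification of \eqref{e4} is built into the definition of the $\boldsymbol{\xi}_i$; and the verification of \eqref{e5} telescopes analogously using \eqref{hc2}--\eqref{hor2}, with each consecutive pair sharing the same $\gamma_{i+1}$ cancelling. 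Equations \eqref{e1}--\eqref{e2} are then handled by defining $\mathbf{q}(0,0)$ via $\mathbf{q}(0,0)=-\mathbf{A}_{0,0}^{-1}\mathbf{A}_{0,-1}\mathbf{q}(0,1)$, which is exactly \eqref{e1}, and checking \eqref{e2} separately — this single equation must be shown to hold automatically once all the others do, by a global-balance/summation argument (summing all equations forces the last one). Finally, uniqueness up to a constant: since the process $X(t)$ is an irreducible positive recurrent QBD under $\rho<1$, its stationary distribution is unique, so any solution of \eqref{e1}--\eqref{e6} that is summable must be a scalar multiple of \eqref{form}; alternatively one appeals to the uniqueness of the starting pair $(\gamma_0,\delta_0)$ (Lemma \ref{lem0}) together with the fact that at every compensation step the new $\gamma$ or $\delta$ in $(0,1)$ is uniquely determined by Lemma \ref{lem2}, so no other formal solution of this type exists.

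The main obstacle I anticipate is the convergence proof — specifically, controlling the growth of the coefficient ratios in \eqref{c} and \eqref{h} as $i\to\infty$ uniformly, and showing the series converges \emph{up to and including} the boundaries $n=0,1$ and $m=0,1$ (where the dominating geometric bound degenerates), since this is exactly where the compensation terms were introduced and where cancellation rather than smallness is doing the work. A secondary, more bookkeeping-heavy difficulty is making the telescoping cancellations in \eqref{e3} and \eqref{e5} rigorous after interchanging the (now infinite) sum with the finite linear operators $\mathbf{B}_{0,0},\mathbf{C}_{0,0},\mathbf{A}_{\cdot,\cdot}$ — legitimate only because of absolute convergence, which is why the convergence step must come first.
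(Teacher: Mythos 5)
Your overall plan is the paper's plan: iterate Steps 2b)--2c), invoke Lemma~\ref{lem1} so that every added product form continues to satisfy the interior equation \eqref{e6}, verify \eqref{e3}--\eqref{e5} by the pairwise cancellations built into the choice of $c_{i+1}$, $h_i$, $\boldsymbol{\xi}_i$, and handle uniqueness via ergodicity of the QBD or the forced starting pair from Lemma~\ref{lem0}. Your extra remark that \eqref{e2} then follows from global balance is a correct point the paper leaves implicit. Where the paper separates the \emph{formal} construction (stated as Theorem~\ref{theorem} with the proof just being ``repeat Steps 2b), 2c)'') from convergence (Step 3, culminating in Theorem~\ref{theoremf}), you fuse them; that is legitimate and arguably cleaner.

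However, the convergence argument as you have written it contains a genuine gap exactly at the place you flag as ``the main obstacle.'' Boundedness of the coefficient ratios $|h_i/c_i|$ and $|c_{i+1}/h_i|$ is not enough: by Lemma~\ref{lemp1} these ratios converge to fixed nonzero limits whose product can exceed $1$, so $|h_i|$ in general grows geometrically, and the geometric decay of $|\gamma_i^m\delta_i^n|$ must \emph{strictly dominate} that growth. This is a ratio-test statement, not a boundedness statement, and it is sharpest precisely at $m+n=1$ (e.g.\ $m=0,n=1$), where your auxiliary bound $|\gamma_i|^m|\delta_i|^n\le|\gamma_i|^{\min(m,n)}$ degenerates to $\le 1$ and gives nothing, and where $\sum_i|\gamma_i|<\infty$ alone cannot absorb the growth of $|h_i|$. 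The paper closes this by computing the exact limit of the term ratio (Proposition~\ref{pr11}), namely
\[
R_1(m,n)=R_2(m,n)=\frac{\tfrac{\lambda+\mu}{\mu}-s^-}{s^+-\tfrac{\lambda+\mu}{\mu}}\left(\frac{w^-}{w^+}\right)^{m+n-1},\qquad s^\pm=1+\rho\pm\sqrt{1+\rho^2},
\]
and then checking that the prefactor is $<1$ (equivalent to $\lambda>0$), so that $R_1(m,n)<1$ already for $m+n=1$. Your proof needs to replace ``ratios are $O(1)$'' and the $|\gamma_i|^{\min(m,n)}$ estimate with this explicit computation; without it the termwise verification of \eqref{e3}--\eqref{e5} and the finiteness of the normalizing constant are not justified.
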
\vspace{-0.25in}
\subsection{Step 3: Absolute convergence of the solution \& normalization constant}
We now have to show that the solutions \eqref{form} converge in two steps: $i)$ to show that the sequences $\{\gamma_{i}\}_{i\in\mathbb{N}}$, $\{\delta_{i}\}_{i\in\mathbb{N}}$ converge to zero exponentially fast (Proposition \ref{prop1}, Lemmas \ref{lemp}, \ref{lemp1}), and $ii)$ the formal solution converges absolutely (Proposition \ref{pr11}).
To show that, we need some preliminary results. Since \eqref{t2} has the same form as in \cite[eq. (8)]{ad0}, the sequences $\{\gamma_{i}\}_{i\in\mathbb{N}}$, $\{\delta_{i}\}_{i\in\mathbb{N}}$ satisfy $1>\rho^{2}=|\gamma_{0}|>\frac{\rho^2}{2+\rho}=|\delta_{0}|>|\gamma_{1}|>|\delta_{1}|>\ldots$.\vspace{-0.1in}   
\begin{proposition}\label{prop1}
The sequences $\{\gamma_{i}\}_{i\in\mathbb{N}}$, $\{\delta_{i}\}_{i\in\mathbb{N}}$ in \eqref{form} satisfy: $0\leq|\gamma_{i}|\leq (\frac{1}{3})^{i}\rho^{2}$, and $0\leq|\delta_{i}|\leq (\frac{1}{3})^{i+1}\rho^{2}$.
\end{proposition}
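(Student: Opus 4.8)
The plan is to prove both inequalities simultaneously by induction on $i$, exploiting the fact that consecutive members of the two sequences are linked by Vieta's relations for \eqref{t2}, together with a sharp lower bound for the modulus of the ``small'' root. Nonnegativity of the moduli is of course trivial, so the content is entirely in the upper bounds.

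First I would extract the algebraic skeleton of the construction of Steps 2b)--2c). By the way the sequences are generated, the pairs $(\gamma_i,\delta_{i-1})$ and $(\gamma_i,\delta_i)$ both satisfy \eqref{t2}, hence $\delta_{i-1},\delta_i$ are precisely the two roots of \eqref{t2} read as the quadratic $(2\rho+\gamma_i)\delta^2-2\gamma_i(1+\rho)\delta+\gamma_i^2=0$ in $\delta$ (which is genuinely quadratic since $|2\rho+\gamma_i|\ge 2\rho-|\gamma_i|>0$); likewise $(\gamma_i,\delta_i)$ and $(\gamma_{i+1},\delta_i)$ both satisfy \eqref{t2}, so $\gamma_i,\gamma_{i+1}$ are the roots of $\gamma^2-\gamma\bigl(2\delta_i(1+\rho)-\delta_i^2\bigr)+2\rho\delta_i^2=0$. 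Vieta then yields the two identities $\gamma_i\gamma_{i+1}=2\rho\delta_i^2$ for $i\ge 0$ and $\delta_{i-1}\delta_i=\gamma_i^2/(2\rho+\gamma_i)$ for $i\ge 1$, and by the ordering $\rho^2=|\gamma_0|>|\delta_0|>|\gamma_1|>\cdots$ recalled just before the proposition, $\delta_i$ and $\gamma_{i+1}$ are always the root of smaller modulus. Solving the $\delta$-quadratic and rationalising the numerator identifies that small root in closed form,
\[
\delta_i=\frac{\gamma_i}{(1+\rho)+\sqrt{1+\rho^2-\gamma_i}},\qquad\text{so}\qquad\frac{|\delta_i|}{|\gamma_i|}=\frac{1}{\bigl|(1+\rho)+\sqrt{1+\rho^2-\gamma_i}\bigr|}.
\]

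The crux of the argument — and the step I expect to be the main obstacle, since for $i\ge 1$ the numbers $\gamma_i$ are in general complex — is a uniform lower bound for this denominator. Under the induction hypothesis $|\gamma_i|\le\rho^2$, the number $z:=1+\rho^2-\gamma_i$ satisfies $\mathrm{Re}\,z\ge 1+\rho^2-|\gamma_i|\ge 1>0$, so $z$ lies in the open right half-plane; using $\mathrm{Re}\sqrt z=\sqrt{(|z|+\mathrm{Re}\,z)/2}$ for the principal branch together with $|z|\ge\mathrm{Re}\,z$, one gets $\mathrm{Re}\sqrt z\ge\sqrt{\mathrm{Re}\,z}\ge 1$. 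Hence $\bigl|(1+\rho)+\sqrt z\bigr|\ge\mathrm{Re}\bigl[(1+\rho)+\sqrt z\bigr]\ge\rho+2$, which gives the clean estimate $|\delta_i|\le|\gamma_i|/(\rho+2)$ whenever $|\gamma_i|\le\rho^2$.

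It then remains to close the induction. For the $\gamma$-bound the base case is $|\gamma_0|=\rho^2$, and from $|\gamma_i|\le(1/3)^i\rho^2\le\rho^2$, the identity $|\gamma_i|\,|\gamma_{i+1}|=2\rho|\delta_i|^2$, and the estimate above, $|\gamma_{i+1}|=2\rho|\delta_i|^2/|\gamma_i|\le\bigl(2\rho/(\rho+2)^2\bigr)|\gamma_i|\le\tfrac13|\gamma_i|\le(1/3)^{i+1}\rho^2$, the inequality $2\rho/(\rho+2)^2\le\tfrac13$ being equivalent to $(\rho-1)^2+3\ge 0$. For the $\delta$-bound I would combine $|\delta_i|\le|\gamma_i|/(\rho+2)$ with the $\gamma$-recursion (so that, for $i\ge 1$, $|\delta_i|\le\bigl(2\rho/(\rho+2)^3\bigr)|\gamma_{i-1}|\le\bigl(2\rho/(\rho+2)^3\bigr)(1/3)^{i-1}\rho^2$) and use $18\rho\le(\rho+2)^3$ on $(0,1)$, verifying the finitely many low-index cases by direct substitution (e.g. $\delta_0=\rho^2/(2+\rho)$). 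Exponential decay of both sequences, and in particular the claimed geometric bounds, then follow.
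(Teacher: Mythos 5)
Your argument is correct and takes a genuinely different route from the paper's. The paper applies Rouch\'e's theorem twice: rescaling $z=\delta/\gamma$ and $w=\gamma/\delta$ in \eqref{t2}, it compares moduli on the circles $|z|=1/2$ and $|w|=2/3$ to get the two contractions $|\delta_i|\le\tfrac12|\gamma_i|$ and $|\gamma_{i+1}|\le\tfrac23|\delta_i|$, and then iterates to produce the factor $1/3$. You instead extract the same control directly from the algebra of \eqref{t2}: Vieta gives $\gamma_i\gamma_{i+1}=2\rho\delta_i^2$ and $\delta_{i-1}\delta_i=\gamma_i^2/(2\rho+\gamma_i)$, and the rationalised closed form $\delta_i=\gamma_i/\bigl[(1+\rho)+\sqrt{1+\rho^2-\gamma_i}\bigr]$, together with the half-plane estimate $\mathrm{Re}\sqrt{z}\ge\sqrt{\mathrm{Re}\,z}$ (valid here since $\mathrm{Re}(1+\rho^2-\gamma_i)\ge 1$ under the induction hypothesis), gives the single sharper contraction $|\delta_i|\le|\gamma_i|/(\rho+2)$. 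The resulting induction is self-contained, avoids the argument principle entirely, and makes the origin of the constant $1/3$ transparent via the elementary inequalities $6\rho\le(\rho+2)^2$ and $18\rho\le(\rho+2)^3$. This is a legitimate, arguably more elementary, alternative.

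One issue you share with the paper's own proof: the asserted $\delta$-bound fails at $i=0$. Since $\delta_0=\rho^2/(2+\rho)$ and $2+\rho<3$ when $\rho<1$, one in fact has $|\delta_0|>\rho^2/3$, so $|\delta_0|\le(1/3)\rho^2$ is false. The paper's chain of inequalities silently substitutes $|\delta_0|=\rho^2/3$; your plan of ``verifying the finitely many low-index cases by direct substitution'' would actually expose rather than settle this case. The clean conclusion both methods deliver is $|\gamma_i|\le(1/3)^i\rho^2$ and $|\delta_i|\le(1/3)^i\rho^2/(2+\rho)$, which has the same geometric rate and is all that is needed downstream (Lemmas \ref{lemp}, \ref{lemp1} and Proposition \ref{pr11}). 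This is a cosmetic defect of the proposition's statement, not a flaw in your core argument.
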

\begin{proof}

We first show that $a)$ for a fixed $\gamma$, with $|\gamma|<\gamma_{0}$, $|\delta|<\frac{|\gamma|}{2}$, and then, $b)$ for a fixed $\delta$, with $|\delta|\leq\gamma_{0}/3$, we have $|\gamma|<\frac{2}{3}|\delta|$.\\
$a)$ For a fixed $\gamma$, set $z=\delta/\gamma$ on $|z|=1/2$. Under this transform, \eqref{t2} reads $0=(2\rho+\gamma)z^{2}-2(1+\rho)z+1$. Set $f(z):=2(1+\rho)z$, $g(z)=(2\rho+\gamma)z^{2}+1$. Note that $|f(z)|=2(1+\rho)|z|=1+\rho$, and $|g(z)|=|(2\rho+|\gamma|)z^{2}+1|\leq (2\rho+|\gamma|)|z|^{2}+1=(2\rho+|\gamma|)\frac{1}{4}+1$.
Moreover, $(2\rho+|\gamma|)\frac{1}{4}+1<\rho+1\Leftrightarrow |\gamma|<2\rho$. Note that $|\gamma|\leq\gamma_{0}=\rho^{2}<2\rho$. Thus, since $|g(z)|<|f(z)|$ on $|z|=1/2$, Rouch\'e's theorem \cite{titc} completes the proof of $a)$.\\
$b)$ For a fixed $\delta$, we show that $|\gamma|<\frac{2}{3}|\delta|$, by setting now $w=\gamma/\delta$ in \eqref{t2} on the domain $|w|=2/3$. Then, \eqref{t2} reads,
$w^{2}+(\delta-2(1+\rho))w+2\rho=0$. Set $h(w):=w^{2}$, $m(w):=w(2(1+\rho)-\delta)-2\rho$. Note that $m(w)$ has a single zero in the interior of $w=2/3$. Then, $|h(w)|=4/9$ and $|m(w)|\geq\frac{2}{3}|2(1+\rho)-|\delta|-3\rho|=\frac{2}{3}|2-\rho-|\delta||=\frac{2}{3}(2-\rho-|\delta|)$. Note that, $\frac{2}{3}<2-\rho-|\delta|\Leftrightarrow |\delta|<\frac{4-3\rho}{3},\text{ and that, }|\delta|\leq\frac{\gamma_{0}}{3}=\frac{\rho^{2}}{3}<\frac{4-3\rho}{3}$. Thus, $|h(w)|<|m(w)|$ on $|w|=2/3$, and Rouch\'e's theorem \cite{titc} completes the proof of $b)$. Then applying $a)$, $b)$ iteratively yields,
\begin{displaymath}
\small{\begin{array}{l}
|\gamma_{i}|\leq\frac{2}{3}|\delta_{i-1}|\leq \frac{2}{3}\frac{1}{2}|\gamma_{i-1}|\leq\ldots\leq(\frac{2}{3}\frac{1}{2})^{i}|\gamma_{0}|=(\frac{1}{3})^{i}\rho^{2},\,
|\delta_{i}|\leq\frac{1}{2}|\gamma_{i}|\leq \frac{1}{2}\frac{2}{3}|\delta_{i-1}|\leq\ldots\leq(\frac{2}{3}\frac{1}{2})^{i}|\delta_{0}|=(\frac{1}{3})^{i}\frac{\rho^{2}}{3}=(\frac{1}{3})^{i+1}\rho^{2}.
\end{array}}
\end{displaymath}
\end{proof}
Proposition \ref{prop1} states that $\gamma_{i}\to0$, $\delta_{i}\to 0$ as $i\to\infty$. In the following, we focus on the asymptotic behaviour of $\delta_{i}/\gamma_{i}$ and $\gamma_{i+1}/\delta_{i}$. This result is important to investigate the convergence of the series in \eqref{form}. 
\begin{lemma}\label{lemp}
$a)$ Let $\gamma_{i}$ fixed and $\delta_{i}$ the root of \eqref{t2} such that $\delta_{i}<\gamma_{i}$. As $i\to\infty$, then $\delta_{i}/\gamma_{i}\to w^{-}$, $|w^{-}|\in (0,1)$ is the smallest root of 
\small{\begin{equation}
\begin{array}{c}
2\rho w^{2}-2(1+\rho)w+1=0.
\end{array}
\label{mmm}
\end{equation} }
$b)$ Let $\delta_{i}$ fixed and $\gamma_{i+1}$ the root in \eqref{t2} such that $\gamma_{i+1}<\delta_{i}$. As $i\to\infty$, then $\gamma_{i+1}/\delta_{i}\to 1/w^{+}$, with $w^{+}>1$ the larger root of \eqref{mmm}.
\end{lemma}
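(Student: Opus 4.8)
The plan is to analyze the asymptotic recursion relating consecutive pairs $(\gamma_i,\delta_i)$ governed by the characteristic equation \eqref{t2}. Recall from Proposition \ref{prop1} that $\gamma_i\to 0$ and $\delta_i\to 0$ geometrically. The idea is that for large $i$ the quadratic terms with a bare $\gamma$ or $\delta$ factor (as opposed to $\gamma\delta$, $\delta^2$, $\gamma^2$) become negligible in the appropriate normalized variable, so \eqref{t2} degenerates to the constant-coefficient quadratic \eqref{mmm}, whose roots $w^-<1<w^+$ govern the limiting ratios.

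For part $a)$, I would substitute $z_i=\delta_i/\gamma_i$ into \eqref{t2}. Dividing \eqref{t2} by $\gamma_i^2$ gives $2(\rho+1)z_i-2\rho z_i^2-1-\gamma_i z_i^2=0$, i.e. $(2\rho+\gamma_i)z_i^2-2(1+\rho)z_i+1=0$. As $i\to\infty$, $\gamma_i\to 0$ by Proposition \ref{prop1}, so the coefficients converge to those of \eqref{mmm} (after rearranging signs: $2\rho z^2-2(1+\rho)z+1=0$). The sequence $z_i$ lies in a compact subset of the punctured disk (since $|\delta_i|<|\gamma_i|/2$ from the proof of Proposition \ref{prop1}, and $z_i\neq 0$ because $\delta_i\neq 0$), so any accumulation point $z_*$ satisfies $2\rho z_*^2-2(1+\rho)z_*+1=0$. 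Since $|z_i|\le 1/2<1$ and the product of the two roots of \eqref{mmm} is $1/(2\rho)$ while their sum is $(1+\rho)/\rho$, exactly one root $w^-$ lies in $(0,1)$ (indeed in $(0,1/2]$ one checks $2\rho\cdot\frac14-2(1+\rho)\cdot\frac12+1=\frac{\rho}{2}-\rho=-\frac{\rho}{2}<0$ and the value at $0$ is $1>0$, pinning $w^-\in(0,1/2)$), and the other root $w^+>1$ is excluded by the bound $|z_i|\le 1/2$. Hence every subsequential limit equals $w^-$, forcing $z_i\to w^-$. Part $b)$ is the mirror image: set $w_i=\gamma_{i+1}/\delta_i$, divide \eqref{t2} (with the pair $(\gamma_{i+1},\delta_i)$) by $\delta_i^2$ to get $w_i^2+(\delta_i-2(1+\rho))w_i+2\rho=0$; since $\delta_i\to 0$ this converges to $w^2-2(1+\rho)w+2\rho=0$, which is exactly \eqref{mmm} rewritten as $2\rho w^{-2}-2(1+\rho)w^{-1}+1=0$ with root $w=1/w^{\pm}$; using $|w_i|\le 2/3<1<w^+$ from the proof of Proposition \ref{prop1}, the only admissible limit is $w=1/w^+$, i.e. $\gamma_{i+1}/\delta_i\to 1/w^+$.

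The routine part is the explicit Vieta computation identifying which root lies where; the only genuine subtlety is justifying that the subsequential-limit argument is legitimate, namely that $z_i$ (resp. $w_i$) stays bounded away from $0$ and $\infty$ so that one may pass to the limit in the normalized quadratic without a root escaping to infinity or collapsing to zero. This is guaranteed by the uniform bounds $0<|\delta_i|<|\gamma_i|/2$ and $0<|\gamma_{i+1}|<\tfrac23|\delta_i|$ established inside the proof of Proposition \ref{prop1}: the ratios are trapped in an annulus $0<r\le|z_i|\le 1/2$ for some $r>0$ (the lower bound follows because if $\delta_i/\gamma_i$ were arbitrarily small, the normalized equation $(2\rho+\gamma_i)z_i^2-2(1+\rho)z_i+1=0$ would force $1\approx 0$, a contradiction), so Bolzano--Weierstrass applies and the continuity of roots of a quadratic in its coefficients finishes the argument. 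The main obstacle, then, is purely bookkeeping: making the degeneration of \eqref{t2} to \eqref{mmm} precise and checking the root selection is consistent with the modulus bounds from Proposition \ref{prop1}; no new analytic machinery beyond Rouch\'e-type estimates already in hand is required.
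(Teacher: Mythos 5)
Your proof is correct and follows the same route that the paper delegates to the cited reference \cite{ad0}: normalize by $\gamma_i^2$ (resp.\ $\delta_i^2$), observe that the coefficient perturbations $\gamma_i$ (resp.\ $\delta_i$) vanish so that the quadratic \eqref{t2} degenerates to \eqref{mmm} (resp.\ its reciprocal polynomial), and use the Rouch\'e modulus bounds from Proposition~\ref{prop1} together with continuity of roots to select the admissible accumulation point. The only presentational nit is in part~$b)$: the exclusion of the root $1/w^-$ should be justified by $1/w^- > 2/3$ (which holds since $w^-\in(0,1/2)$), not by $w^+>1$ directly, though the conclusion is unaffected; the paper itself gives no details and simply refers to \cite[Lemma~1]{ad0}, so your self-contained write-out is a welcome expansion of the same argument.
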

\begin{proof}
See \cite{ad0}, since \eqref{t2} that generates the sequences $\{\gamma_{i}\}_{i\in\mathbb{N}}$, $\{\delta_{i}\}_{i\in\mathbb{N}}$ has the same form as in \cite[eq. (8)]{ad0}.
\end{proof}

The final ingredient to check the convergence of the series \eqref{form} is to determine the values of the ratios $c_{i+1}/h_{i}$, $h_{i}/c_{i}$ as $i\to\infty$.
\begin{lemma} \label{lemp1}
\begin{enumerate}
\item Let $\gamma_{i}$, $\delta_{i}$, $\gamma_{i+1}$ be roots of \eqref{t2} such that $1>|\gamma_{i}|>|\delta_{i}|>|\gamma_{i+1}|$. Then, as $i\to\infty$, $\frac{c_{i+1}}{h_{i}}\to\frac{\frac{\lambda+\mu}{2\mu\rho}-w^{-}}{w^{+}-\frac{\lambda+\mu}{2\mu\rho}}$.
\item Let $\delta_{i-1}$, $\gamma_{i}$, $\delta_{i}$ be roots of \eqref{t2} such that $1>|\delta_{i-1}|>|\gamma_{i}|>|\delta_{i}|$. Then, as $i\to\infty$, $\frac{h_{i}}{c_{i}}\to-\frac{w^{+}}{w^{-}}$.
\item As $i\to\infty$, $\boldsymbol{\xi}\to-h_{0}\mathbf{C}_{0,0}^{-1}w^{-}\mathbf{A}_{1,-1}\boldsymbol{\theta}$.
\item For $i\geq1$, and $\boldsymbol{\xi}_{0}:=\boldsymbol{\xi}$, the vector $\boldsymbol{\xi}_{i}$,  is such that $\boldsymbol{\xi}_{i}\to-[\frac{w^{+}}{w^{-}}\boldsymbol{\xi}_{i-1}+\mathbf{C}_{0,0}^{-1}\mathbf{A}_{1,-1}[h_{i}w^{-}+w^{+}c_{i})]\boldsymbol{\theta}$, as $i\to\infty$.
\end{enumerate}
\end{lemma}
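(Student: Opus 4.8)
The plan is to establish the four limiting relations by passing to the limit $i\to\infty$ in the recursive formulas that define $c_i$, $h_i$, $\boldsymbol{\xi}$, and $\boldsymbol{\xi}_i$, using the ratio limits from Lemma \ref{lemp} as the main input. For part (1), I would take the expression for $c_{i+1}/h_i$ analogous to \eqref{c}, namely $c_{i+1}/h_i = -\bigl(\gamma_{i+1}-\delta_i\tfrac{\lambda+\mu}{\mu}\bigr)\big/\bigl(\gamma_i-\delta_i\tfrac{\lambda+\mu}{\mu}\bigr)$ (the compensation step on the vertical boundary with indices shifted), divide numerator and denominator by $\delta_i$, and use that $\gamma_{i+1}/\delta_i\to 1/w^+$ and $\gamma_i/\delta_i\to 1/w^-$ (the latter being the reciprocal of the limit in Lemma \ref{lemp}(a), since $\delta_i/\gamma_i\to w^-$). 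After rewriting $\tfrac{\lambda+\mu}{\mu}$ in terms of $\rho$ via $\lambda(\lambda+2\alpha)=2\alpha\mu\rho$ and simplifying, this gives $\tfrac{1/w^+ - (\lambda+\mu)/\mu}{1/w^- - (\lambda+\mu)/\mu}$, which after multiplying top and bottom by appropriate factors rearranges to the stated form $\tfrac{(\lambda+\mu)/(2\mu\rho)-w^-}{w^+-(\lambda+\mu)/(2\mu\rho)}$; I would keep careful track of signs here.

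For part (2), I would start from \eqref{h} with indices shifted, $h_i/c_i = -\bigl((\rho+\gamma_i)/\delta_i-(1+\rho)\bigr)\big/\bigl((\rho+\gamma_i)/\delta_{i-1}-(1+\rho)\bigr)$, and let $i\to\infty$: since $\gamma_i\to 0$ and $\gamma_i/\delta_i\to 1/w^-$, $\gamma_i/\delta_{i-1}\to ?$. Here I need the ratio $\gamma_i/\delta_{i-1}$, which by Lemma \ref{lemp}(b) tends to $1/w^+$. Then $h_i/c_i \to -\bigl(\rho/\delta_i-(1+\rho)\bigr)/\bigl(\rho/\delta_{i-1}-(1+\rho)\bigr)$; both $\rho/\delta_i$ and $\rho/\delta_{i-1}$ diverge, so the limit is governed by $\lim \delta_{i-1}/\delta_i$. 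Using $\delta_{i-1}/\delta_i = (\delta_{i-1}/\gamma_i)(\gamma_i/\delta_i)\to w^+\cdot(1/w^-)$... wait, I must be careful: $\delta_{i-1}/\gamma_i \to w^+$ (reciprocal of Lemma \ref{lemp}(b)) and $\gamma_i/\delta_i\to 1/w^-$, so $\delta_{i-1}/\delta_i\to w^+/w^-$, hence $h_i/c_i\to -w^+/w^-$ after the divergent terms cancel in the ratio. This confirms the claim.

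For parts (3) and (4), I would substitute the limiting ratios into \eqref{xi} and \eqref{wxi}. For (3): \eqref{xi} reads $\boldsymbol{\xi} = -\tfrac{h_0}{\gamma_0}\mathbf{C}_{0,0}^{-1}[\mathbf{A}_{1,-1}+\gamma_0\mathbf{A}_{0,-1}]\delta_0\boldsymbol{\theta}$; since the claim involves $w^-$ and a limit, I suspect the statement refers to the general-index $\boldsymbol{\xi}_i$ structure rather than the fixed $\boldsymbol{\xi}$, or uses $\delta_0/\gamma_0 = 1/(2+\rho)$ together with a normalization — I would reconcile this by noting $\delta_i/\gamma_i\to w^-$ and that the $\gamma_0\mathbf{A}_{0,-1}$ term is lower order, yielding the stated limit; if instead \eqref{xi} is exact, I'd interpret the ``$\to$'' as an identification of the leading behaviour. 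For (4), I would take \eqref{wxi} with shifted indices, $\boldsymbol{\xi}_i = -\tfrac{1}{\gamma_i}[\gamma_{i-1}\boldsymbol{\xi}_{i-1} + \mathbf{C}_{0,0}^{-1}(\mathbf{A}_{1,-1}+\gamma_i\mathbf{A}_{0,-1})(c_i\delta_{i-1}+h_i\delta_i)\boldsymbol{\theta}]$, divide through appropriately, and use $\gamma_{i-1}/\gamma_i\to$ (from $\gamma_{i-1}/\gamma_i = (\gamma_{i-1}/\delta_{i-1})(\delta_{i-1}/\gamma_i)$, which tends to $w^-\cdot w^+$... again sign/normalization care needed) together with $\delta_{i-1}/\gamma_i\to w^+$, $\delta_i/\gamma_i\to$ (tends to $0$ since $\delta_i \ll \gamma_i$ by geometric decay — actually $\delta_i/\gamma_i\to w^- \ne 0$; rather I mean $\delta_i/\gamma_i$ stays bounded), and $c_i/h_i$, $h_i/c_i$ limits from (1)–(2) to obtain the recursion $\boldsymbol{\xi}_i\to -[\tfrac{w^+}{w^-}\boldsymbol{\xi}_{i-1}+\mathbf{C}_{0,0}^{-1}\mathbf{A}_{1,-1}(h_iw^- + w^+c_i)\boldsymbol{\theta}]$.

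The main obstacle will be bookkeeping: extracting the correct shifted-index versions of the recursions for $c_i$, $h_i$, $\boldsymbol{\xi}_i$ from the compensation steps (the text only writes out the first few, so I must verify the general pattern by one more compensation round), and then correctly identifying which ratios $\gamma_i/\delta_i$, $\gamma_{i+1}/\delta_i$, $\gamma_i/\delta_{i-1}$, $\delta_{i-1}/\delta_i$ appear and whether they converge to $w^-$, $w^+$, or their reciprocals — a single sign or inversion error propagates through all four parts. A secondary point is handling the matrix (rather than scalar) nature of parts (3)–(4): because $\mathbf{A}_{1,-1}$, $\mathbf{A}_{0,-1}$ are rank-one and $\boldsymbol{\theta}$ is a fixed eigendirection independent of $(\gamma,\delta)$ (as noted after Lemma \ref{lem1}), the vector recursions collapse to scalar ones along $\boldsymbol{\theta}$ and $\mathbf{C}_{0,0}^{-1}\mathbf{A}_{1,-1}\boldsymbol{\theta}$, which I would exploit to keep the algebra tractable; the terms carrying $\gamma_i\mathbf{A}_{0,-1}$ vanish in the limit since $\gamma_i\to 0$ by Proposition \ref{prop1}.
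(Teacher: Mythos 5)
Your proposal follows essentially the same route as the paper: take the recursions from the compensation steps (the shifted‑index versions of \eqref{c}, \eqref{h}, \eqref{xi}, \eqref{wxi}), divide by the relevant powers of $\gamma_i$ or $\delta_i$, and pass to the limit using the ratios $\delta_i/\gamma_i\to w^-$ and $\gamma_{i+1}/\delta_i\to 1/w^+$ from Lemma~\ref{lemp}. The structure is right. Two small corrections to the bookkeeping you flagged yourself as a concern:

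In part (1), the final simplification is not a rewriting of $\tfrac{\lambda+\mu}{\mu}$ in terms of $\rho$; it is a Vieta identity. After you reach $\bigl(\tfrac{\lambda+\mu}{\mu}-\tfrac{1}{w^+}\bigr)\big/\bigl(\tfrac{1}{w^-}-\tfrac{\lambda+\mu}{\mu}\bigr)$, you multiply numerator and denominator by $w^-w^+$ and use that the roots of \eqref{mmm}, $2\rho w^2-2(1+\rho)w+1=0$, satisfy $w^-w^+=\tfrac{1}{2\rho}$; this is what produces the factor $\tfrac{\lambda+\mu}{2\rho\mu}$.

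In part (4), you wrote $\gamma_{i-1}/\gamma_i = (\gamma_{i-1}/\delta_{i-1})(\delta_{i-1}/\gamma_i)\to w^-\cdot w^+$, but $\gamma_{i-1}/\delta_{i-1}\to 1/w^-$ (it is the reciprocal of $\delta_{i-1}/\gamma_{i-1}\to w^-$), so the product is $w^+/w^-$, not $w^-w^+=\tfrac{1}{2\rho}$. This matters: $w^-w^+$ and $w^+/w^-$ are different quantities, and only the latter gives the coefficient $\tfrac{w^+}{w^-}$ on $\boldsymbol{\xi}_{i-1}$ in the stated recursion. You arrive at the correct final expression, so this reads as a slip in the intermediate step rather than a conceptual gap, but it is exactly the kind of inversion error you warned against.

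Your reading of part (3) — that the ``$\to$'' on the fixed vector $\boldsymbol{\xi}$ should be understood as identifying the leading‑order / asymptotic form used downstream in the convergence argument, since $\delta_0/\gamma_0$ is a specific constant and the $\gamma_0\mathbf{A}_{0,-1}$ term is lower order — matches what the paper actually does: it plugs $\delta_0/\gamma_0\approx w^-$ and drops $\gamma_0\mathbf{A}_{0,-1}$. The observation that $\boldsymbol{\theta}$ is a fixed eigendirection and the $\mathbf{A}$‑matrices are rank one, so the vector recursions collapse to scalars, is a useful structural remark, though the paper does not spell it out.
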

\begin{proof}
$1.$ Using the indexing of the compensation parameters \eqref{c},
\begin{displaymath}
\begin{array}{c}
\frac{c_{i+1}}{h_{i}}=-\frac{\gamma_{i+1}-\delta_{i}\left(\frac{\lambda+\mu}{\mu}\right)}{\gamma_{i+1}-\delta_{i}\left(\frac{\lambda+\mu}{\mu}\right)}=\frac{\left(\frac{\lambda+\mu}{\mu}\right)-\frac{\gamma_{i+1}}{\delta_{i}}}{\frac{\gamma_{i}}{\delta_{i}}-\left(\frac{\lambda+\mu}{\mu}\right)}\to\frac{\left(\frac{\lambda+\mu}{\mu}\right)-\frac{1}{w^{+}}}{\frac{1}{w^{-}}-\left(\frac{\lambda+\mu}{\mu}\right)}=\frac{w^{+}w^{-}\left(\frac{\lambda+\mu}{\mu}\right)-w^{-}}{w^{+}-w^{+}w^{-}\left(\frac{\lambda+\mu}{\mu}\right)}=\frac{\left(\frac{\lambda+\mu}{2\rho\mu}\right)-w^{-}}{w^{+}-\left(\frac{\lambda+\mu}{2\rho\mu}\right)},
\end{array}
\end{displaymath}
since as $i\to\infty$, Lemma \ref{lemp} implies that $\gamma_{i+1}/\delta_{i}\to 1/w^{+}$, $\gamma_{i}/\delta_{i}\to 1/w^{-}$ , and where the last equality follows from $w^{-}w^{+}=1/2\rho$.\\
$2.$ Similarly, from \eqref{h}, we have $
\frac{h_{i}}{c_{i}}=-\frac{(\rho+\gamma_{i})/\delta_{i}-(1+\rho)}{(\rho+\gamma_{i})/\delta_{i-1}-(1+\rho)}=-\frac{(\rho+\gamma_{i})\delta_{i-1}/\delta_{i}-(1+\rho)\delta_{i-1}}{(\rho+\gamma_{i})-(1+\rho)\delta_{i-1}}$.

Since as $i\to\infty$, $\gamma_{i}\to 0$, $\delta_{i-1}\to0$, and $\frac{\delta_{i-1}}{\delta_{i}}=\frac{\delta_{i-1}}{\gamma_{i}}\frac{\gamma_{i}}{\delta_{i}}\to w^{+}\frac{1}{w^{-}}$, the assertion $2$ is now proved.\\
$3.$ Note that \eqref{xi} and Lemma \ref{lemp} imply
$\boldsymbol{\xi}=-h_{0}\mathbf{C}_{0,0}^{-1}[\mathbf{A}_{1,-1}+\gamma_{0}\mathbf{A}_{0,-1}]\frac{\delta_{0}}{\gamma_{0}}\boldsymbol{\theta}\to-h_{0}\mathbf{C}_{0,0}^{-1}\mathbf{A}_{1,-1}w^{-}\boldsymbol{\theta}$.\\
$4.$ The indexing in \eqref{wxi} implies for $i\geq1$ that
\small{\begin{displaymath}
\begin{array}{rl}
\boldsymbol{\xi}_{i}=&-\frac{1}{\gamma_{i}}[\gamma_{i-1}\boldsymbol{\xi}_{i-1}+\mathbf{C}_{0,0}^{-1}(\mathbf{A}_{1,-1}+\gamma_{i}\mathbf{A}_{0,-1})(c_{i}\delta_{i-1}+h_{i}\delta_{i})\boldsymbol{\theta}]=-[\frac{\gamma_{i-1}}{\delta_{i}}\frac{\delta_{i}}{\gamma_{i}}\boldsymbol{\xi}_{i-1}+\mathbf{C}_{0,0}^{-1}(\mathbf{A}_{1,-1}+\gamma_{i}\mathbf{A}_{0,-1})(c_{i}\frac{\delta_{i-1}}{\gamma_{i}}+h_{i}\frac{\delta_{i}}{\gamma_{i}})\boldsymbol{\theta}]\\
\to&-[\frac{w^{+}}{w^{-}}\boldsymbol{\xi}_{i-1}+\mathbf{C}_{0,0}^{-1}\mathbf{A}_{1,-1}(h_{i}w^{-}+w^{+}c_{i})\boldsymbol{\theta}],
\end{array}
\end{displaymath}}
as $i\to\infty$, where $\boldsymbol{\xi}_{0}:=\boldsymbol{\xi}$ using results from Lemma \ref{lemp}.
\end{proof}
Proposition \ref{prop1} and Lemmas \ref{lemp}, \ref{lemp1} provide the necessary information to prove the convergence of series given in Theorem \ref{theorem}:\vspace{-0.1in}
\begin{proposition}\label{pr11}\begin{enumerate}
\item For $m\geq0$, $n\geq1$, $\sum_{i=0}^{\infty}h_{i}\gamma_{i}^{m}\delta_{i}^{n}$, $\sum_{i=0}^{\infty}c_{i+1}\gamma_{i+1}^{m}\delta_{i}^{n}$ converge absolutely.
\item For $m\geq0$, $\sum_{i=0}^{\infty}\gamma_{i}^{m}\xi_{i,k}$ where $\boldsymbol{\xi}_{0}:=\boldsymbol{\xi}=(\xi_{0,0},\xi_{0,1})^{T}$, $\boldsymbol{\xi}_{i}:=(\xi_{i,0},\xi_{i,1})^{T}$, $i\geq 1$ converge absolutely.
\end{enumerate}
\end{proposition}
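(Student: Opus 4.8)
The plan is to bound the general terms of the series by a convergent geometric series, using the two ingredients already in hand: the exponential decay of $\{\gamma_i\}$, $\{\delta_i\}$ from Proposition \ref{prop1}, and the finite limits of the ratios $c_{i+1}/h_i$, $h_i/c_i$, and of the components of $\boldsymbol{\xi}_i$ relative to the previous one, from Lemmas \ref{lemp} and \ref{lemp1}. First I would fix $m\geq 0$, $n\geq 1$ and write $|h_i\gamma_i^m\delta_i^n|=|h_0|\cdot\bigl|\tfrac{h_1}{c_1}\tfrac{c_1}{h_0}\cdots\bigr|\cdot|\gamma_i|^m|\delta_i|^n$, i.e. expand $h_i$ as a telescoping product of the ratios $h_j/c_j$ and $c_j/h_{j-1}$ studied in Lemma \ref{lemp1}; since each ratio converges to a finite nonzero limit, there is a constant $K$ and an index $i_0$ so that $|h_i|\leq K\,M^i$ for all $i$, where $M=\bigl|\tfrac{w^+}{w^-}\bigr|\cdot\bigl|\tfrac{(\lambda+\mu)/(2\rho\mu)-w^-}{w^+-(\lambda+\mu)/(2\rho\mu)}\bigr|$ is the product of the two limiting ratios. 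Combining this with $|\gamma_i|\leq(1/3)^i\rho^2$ and $|\delta_i|\leq(1/3)^{i+1}\rho^2$ from Proposition \ref{prop1} gives
\begin{equation*}
|h_i\gamma_i^m\delta_i^n|\leq K\,M^i\,(1/3)^{i(m+n)}(\rho^2)^{m+n}(1/3)^n,
\end{equation*}
and the same bound (up to the constant) for $|c_{i+1}\gamma_{i+1}^m\delta_i^n|$; for $m+n\geq 1$ the factor $(M/3)^i$ — or more conservatively $M^i(1/3)^{i(m+n)}$ — is summable as soon as $i$ is large enough that $M(1/3)^{m+n}<1$, which holds for all $m+n\geq 1$ provided $M<3$.

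Hence the real work is the estimate $M<3$ (or, if that fails for small $m+n$, a sharper termwise bound that still closes): I would compute $w^-$, $w^+$ explicitly as the roots of \eqref{mmm}, namely $w^\pm=\tfrac{(1+\rho)\pm\sqrt{(1+\rho)^2-2\rho}}{2\rho}=\tfrac{(1+\rho)\pm\sqrt{1+\rho^2}}{2\rho}$, note $w^-w^+=1/(2\rho)$, and then bound the two limiting ratios in terms of $\rho\in(0,1)$. For $\rho<1$ one has $w^-<1<w^+$ and $(\lambda+\mu)/(2\rho\mu)$ sits strictly between them (this is essentially what guarantees $c_{i+1}/h_i$ and $h_i/c_i$ are finite and of the right sign), so both ratios are bounded, and a direct estimate should give a bound strictly less than $3$ uniformly in $\rho$; alternatively, since what is genuinely needed is only $\limsup_i |\gamma_i|^{1/i}\cdot(\text{ratio bound})<1$ after multiplying by at least one factor $|\gamma_i|$ or $|\delta_i|$, the cruder inequality $|\gamma_{i+1}|\leq\tfrac23|\delta_i|$, $|\delta_i|\leq\tfrac12|\gamma_i|$ from the proof of Proposition \ref{prop1} already forces $|\gamma_i|,|\delta_i|=O((1/3)^i)$, and one just needs the ratio-product to grow slower than $3^i$, which the explicit limits deliver.

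For part 2, the series $\sum_i\gamma_i^m\xi_{i,k}$, I would set up a recursive bound on $\|\boldsymbol{\xi}_i\|$ from item 4 of Lemma \ref{lemp1}: $\boldsymbol{\xi}_i$ is asymptotically $-\bigl[\tfrac{w^+}{w^-}\boldsymbol{\xi}_{i-1}+\mathbf{C}_{0,0}^{-1}\mathbf{A}_{1,-1}(h_iw^-+w^+c_i)\boldsymbol{\theta}\bigr]$, so $\|\boldsymbol{\xi}_i\|\leq (\tfrac{w^+}{w^-}+\varepsilon)\|\boldsymbol{\xi}_{i-1}\|+\text{const}\cdot(|h_i|+|c_i|)$ for $i$ large; unrolling this linear recursion and using $|h_i|,|c_i|\leq K'M^i$ from part 1 gives $\|\boldsymbol{\xi}_i\|\leq K''\max\{w^+/w^-,M\}^i$, and then $|\gamma_i^m\xi_{i,k}|\leq K''\,(\rho^2)^m(1/3)^{im}\max\{w^+/w^-,M\}^i$, which for $m\geq 1$ is summable once $\max\{w^+/w^-,M\}<3$; the case $m=0$ needs the extra decay $|\gamma_i|^0=1$ to be compensated by showing the constant term $\boldsymbol{\xi}_i$ itself decays, i.e. that $w^+/w^-$ and $M$ are both $<1$ — but in fact one checks $\sum_i\xi_{i,k}$ need only appear with $\gamma_i^m$, and for $m=0$ the row of \eqref{form} giving $\mathbf{q}(m,0)$ is only used for $m\geq 1$, so this case does not arise. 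The main obstacle, therefore, is the uniform-in-$\rho$ inequality controlling the product of limiting ratios against the geometric decay rate $1/3$; everything else is assembling the pieces already proved.
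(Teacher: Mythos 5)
Your proposal has a genuine gap, and it lies exactly where you flagged uncertainty: the inequality $M<3$ that your termwise bound needs is false in general. Write $A=\frac{(\lambda+\mu)/(2\rho\mu)-w^-}{w^+-(\lambda+\mu)/(2\rho\mu)}$, so that your $M=\frac{w^+}{w^-}\,A$. From \eqref{mmm} one has $w^\pm=\frac{(1+\rho)\pm\sqrt{1+\rho^2}}{2\rho}$, hence $\frac{w^+}{w^-}=\frac{(1+\rho)+\sqrt{1+\rho^2}}{(1+\rho)-\sqrt{1+\rho^2}}\to\infty$ as $\rho\to 0$, while $A$ stays bounded below away from $0$ (indeed $A\to 1$ as $\lambda/\mu\to\rho$); so $M$ is unbounded over the parameter range. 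Concretely, $\rho=1/2$ with $\lambda/\mu=0.4$ gives $w^+/w^-\approx 6.85$ and $A\approx 0.84$, so $M\approx 5.7>3$. The root cause is that you paired the \emph{crude} Proposition~\ref{prop1} bound $|\gamma_i|\leq(1/3)^i\rho^2$ with the \emph{sharp} growth rate $|h_{i+1}/h_i|\to M$. These are mismatched: the true asymptotic decay rate is $|\gamma_{i+1}/\gamma_i|\to w^-/w^+$ by Lemma~\ref{lemp}, and one can check $w^-/w^+<1/3$ for every $\rho\in(0,1)$ — so $(1/3)^i$ is a genuinely weaker bound, and using it throws away precisely the cancellation against $M^i$ that makes the series converge.

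The paper instead applies the ratio test directly to the full term $h_i\gamma_i^m\delta_i^n$, taking limits using both Lemma~\ref{lemp} (for $\gamma_{i+1}/\gamma_i$, $\delta_{i+1}/\delta_i\to w^-/w^+$) and Lemma~\ref{lemp1} (for $h_{i+1}/h_i\to\pm M$). The combined ratio is
\[
R_1(m,n)=M\left(\frac{w^-}{w^+}\right)^{m+n}=A\left(\frac{w^-}{w^+}\right)^{m+n-1},
\]
(writing $s^\pm=2\rho w^\pm=1+\rho\pm\sqrt{1+\rho^2}$, this matches the form in the paper). Since $m\geq 0$, $n\geq 1$ gives $m+n-1\geq 0$, and $w^-/w^+<1$, it suffices that $A<1$; this reduces to $\lambda/\mu<\rho$, i.e.\ $\lambda>0$, which always holds. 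The key algebraic identity your proof misses is $M\cdot\frac{w^-}{w^+}=A<1$: the growth of $|h_i|$ is exactly offset by a \emph{single} factor $\gamma_i$ or $\delta_i$, provided one uses the sharp decay $w^-/w^+$ rather than $1/3$. If you replace your bound $|\gamma_i|\leq(1/3)^i\rho^2$ by $|\gamma_i|\leq K_\varepsilon(w^-/w^+ +\varepsilon)^i$ (which Lemma~\ref{lemp} delivers for any $\varepsilon>0$), your argument closes — but at that point you have reconstructed the paper's ratio-test computation. The same repair is needed in part~2, where the unrolled recursion on $\|\boldsymbol{\xi}_i\|$ again requires pairing the $(w^+/w^-)^i$ growth against the sharp $(w^-/w^+)^{im}$ decay of $\gamma_i^m$ for $m\geq 1$.
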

\begin{proof}
Since $\boldsymbol{\theta}$ is independent of the values of $\gamma_{i}$, $\delta_{i}$, let $\theta_{0}=\mu$. 
The proof follows the lines in \cite{ad0}. Set for $m\geq0$, $n\geq1$,
\small{\begin{equation*}
\begin{array}{rl}
R_{1}(m,n):=&\lim_{i\to\infty}\left|\frac{h_{i+1}\gamma_{i+1}^{m}\delta_{i+1}^{n}}{h_{i}\gamma_{i}^{m}\delta_{i}^{n}}\right|=\lim_{i\to\infty}\left|\frac{\frac{h_{i+1}}{c_{i+1}}\frac{\gamma_{i+1}^{m}}{\delta_{i+1}}\frac{\delta_{i+1}^{m+n}}{\gamma_{i}^{m+n}}}{\frac{h_{i}}{c_{i+1}}\frac{\gamma_{i}^{m}}{\delta_{i}^{m}}\frac{\delta_{i}^{m+n}}{\gamma_{i}^{m+n}}}\right|,\\R_{2}(m,n):=&\lim_{i\to\infty}\left|\frac{c_{i+2}\gamma_{i+2}^{m}\delta_{i+1}^{n}}{c_{i+1}\gamma_{i+1}^{m}\delta_{i}^{n}}\right|=\lim_{i\to\infty}\left|\frac{\frac{c_{i+2}}{h_{i+1}}\frac{\gamma_{i+2}^{m}}{\delta_{i+1}^{m}}\frac{\delta_{i+1}^{m+n}}{\gamma_{i+1}^{m+n}}}{\frac{c_{i+1}}{h_{i+1}}\frac{\gamma_{i+1}^{m}}{\delta_{i}^{m}}\frac{\delta_{i}^{m+n}}{\gamma_{i+1}^{m+n}}}\right|,
\end{array}
\label{colll}
\end{equation*}}
since $h_{i}$s, $c_{i}$s and $\gamma_{i}$s, $\delta_{i}$s are non-zero. If these limits exist and are less than one, then the series in assertion $1.$ converge absolutely. 

Using Lemmas \ref{lemp}, \ref{lemp1}, $R_{1}(m,n)=R_{2}(m,n)=\frac{\frac{\lambda+\mu}{\mu}-s^{-}}{s^{+}-\frac{\lambda+\mu}{\mu}}\left(\frac{w^{-}}{w^{+}}\right)^{m+n-1}$,
where $s^{\pm}=1+\rho\pm\sqrt{1+\rho^{2}}$ due to \eqref{mmm}. Note that $w^{-}<w^{+}$. Moreover, simple algebraic arguments yields that $\frac{\frac{\lambda+\mu}{\mu}-s^{-}}{s^{+}-\frac{\lambda+\mu}{\mu}}<1$ if and only is $\lambda>0$, which is true.
  Therefore $R_{1}(m,n)=R_{2}(m,n)<1$, and the series in assertion $1.$ converge absolutely, i.e., the error terms that we introduce in each vertical and horizontal compensation step tend to zero. Using similar arguments we can prove assertion $2.$
\end{proof}

We conclude by showing that the series
\small{\begin{displaymath}
\begin{array}{rl}
\mathcal{C}:=&\sum_{k=0}^{1}\sum_{m=0}^{\infty}\sum_{n=1}^{\infty}q_{m,n}(k)+\sum_{k=0}^{1}\sum_{m=0}^{\infty}q_{m,0}(k)\\
=&\sum_{k=0}^{1}\sum_{m=0}^{\infty}\sum_{n=1}^{\infty}q_{m,n}(k)+\frac{1}{1+\rho}\sum_{m=1}^{\infty}(\rho q_{m-1,1}(1)+\frac{1}{2}q_{m,1}(0)+\alpha\rho q_{m,1}(1))+\frac{\alpha}{\lambda}(\frac{\mu}{\lambda}q_{0,1}(0)+q_{0,1}(1)).\vspace{-0.1in}
\end{array}
\end{displaymath}}
converges. Thus, the convergence follows if $\sum_{k=0}^{1}\sum_{m=0}^{\infty}\sum_{n=1}^{\infty}q_{m,n}(k)<\infty$.
Note that\vspace{-0.1in}
\small{\begin{displaymath}
\begin{array}{rl}
\sum_{k=0}^{1}\sum_{m=0}^{\infty}\sum_{n=1}^{\infty}|q_{m,n}(k)|\leq&(\lambda+2\alpha+\mu)\sum_{m=0}^{\infty}\sum_{n=1}^{\infty}[\sum_{i=0}^{\infty}(|h_{i}\gamma_{i}^{m}\delta_{i}^{n}|+|c_{i+1}\gamma_{i+1}^{m}\delta_{i}^{n}|)]\\<&[\sum_{i=0}^{\infty}\frac{|h_{i}|}{1-|\gamma_{i}|}\frac{|\delta_{i}|}{1-|\delta_{i}|}+\sum_{i=0}^{\infty}\frac{|c_{i+1}|}{1-|\gamma_{i+1}|}\frac{|\delta_{i}|}{1-|\delta_{i}|}].
\end{array}
\end{displaymath}}
The convergence of these series follows directly as in Proposition \ref{pr11}, having in mind that as $i\to\infty$, $\gamma_{i}\to 0$, $\delta_{i}\to 0$, and using lemmas \ref{lemp}, \ref{lemp1}.
%
Therefore, $\sum_{k=0}^{1}\sum_{m=0}^{\infty}\sum_{n=1}^{\infty}q_{m,n}(k)<\infty$. To conclude, the series in \eqref{form} is the unique up to a multiplicative constant, solution of the balance equations \eqref{e1}-\eqref{e6}. The following theorem states the main result of this paper:
\begin{theorem}\label{theoremf}
For $\rho<1$,
\small{\begin{equation*}
\begin{array}{rl}
\mathbf{q}(m,n)=&\mathcal{C}^{-1}\sum_{i=0}^{\infty}(h_{i}\gamma_{i}^{m}+c_{i+1}\gamma_{i+1}^{m})\delta_{i}^{n}\boldsymbol{\theta},\,\,({\text{pairs with the same $\delta$-term})},\,m\geq0,n\geq1,\\
=&\mathcal{C}^{-1}(h_{0}\gamma_{0}^{m}\delta_{0}^{n}+\sum_{i=0}^{\infty}(h_{i+1}\delta_{i}^{n}+c_{i+1}\delta_{i+1}^{n})\gamma_{i+1}^{m}\boldsymbol{\theta},\,\,({\text{pairs with the same $\gamma$-term})}\,m\geq0,n\geq1,\\
\mathbf{q}(m,0)=&\mathcal{C}^{-1}(\gamma_{0}^{m}\boldsymbol{\xi}+\sum_{i=1}^{\infty}\gamma_{i}^{m}\boldsymbol{\xi}_{i}),\,m\geq1,\\
\mathbf{q}(0,0)=&-\mathbf{A}_{0,0}^{-1}\mathbf{A}_{0,-1}\mathbf{q}(0,1),\vspace{-0.1in}
\end{array}
\label{formf}
\end{equation*}}
and $\boldsymbol{\theta}=\theta_{0}(1,\frac{\lambda+2\alpha}{\mu})^{T}$, $\theta_{0}>0$, $\mathcal{C}$ be the normalization constant, and $\{\gamma_{i}\}_{i\in\mathbb{N}}$, $\{\delta_{i}\}_{i\in\mathbb{N}}$, $\{h_{i}\}_{i\in\mathbb{N}}$, $\{c_{i}\}_{i\in\mathbb{N}}$, $\{\boldsymbol{\xi}_{i}\}_{i\in\mathbb{N}}$, are
obtained recursively based on the analysis above.\vspace{-0.2in}
\end{theorem}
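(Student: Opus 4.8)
The plan is to assemble Theorem~\ref{theoremf} by collecting the pieces that have already been established. The essential content of the theorem is threefold: (a) the series in \eqref{form} formally satisfies every balance equation \eqref{e1}--\eqref{e6}; (b) it converges absolutely, hence defines a genuine (summable) measure on the state space; and (c) it is the \emph{unique} such solution up to a multiplicative constant, so dividing by the normalization constant $\mathcal{C}$ yields the stationary distribution. Part (a) is exactly the formal construction of Step~2d, summarised in Theorem~\ref{theorem}: the initial term (Lemma~\ref{plm}) satisfies the interior equations \eqref{e6} and the horizontal boundary \eqref{e4}--\eqref{e5}, and the alternating compensation steps 2b)--2c) successively repair the vertical and horizontal boundary errors while preserving \eqref{e6}; in the limit every equation \eqref{e1}--\eqref{e6} is satisfied, with $\mathbf{q}(0,0)$ fixed by \eqref{e1}. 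Part (b) is Proposition~\ref{pr11} together with the convergence argument for $\mathcal{C}$ given immediately before the theorem statement — absolute convergence of the double sums over $(m,n)$ and of the single sums over $m$ on $n=0$ — which guarantees both that the term-by-term manipulations used in (a) are legitimate and that $\mathcal{C}<\infty$.

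The only genuinely new thing to argue in the proof is part (c), uniqueness. I would argue it as follows. Since $\{X(t)\}$ is an irreducible, positive recurrent (once $\rho<1$ is shown sufficient, which this very construction does) QBD process, its stationary distribution is unique up to scaling; so it suffices to show that the constructed series is, up to a constant, the \emph{only} absolutely summable solution of \eqref{e1}--\eqref{e6}. Here I would invoke Lemma~\ref{lem0}: any summable solution of \eqref{e4}--\eqref{e6} must, by that lemma, be of the product-geometric form $\mathbf{q}(m,n)=\rho^{2m}\mathbf{u}(n)$ in the region $m>0$ — but that lemma gives uniqueness only \emph{up to a constant} for the pure interior-plus-horizontal structure, not the full boundary problem. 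The cleaner route is the classical compensation-method uniqueness argument: given two summable solutions, their difference $\mathbf{d}(m,n)$ is again a (signed, summable) solution of the homogeneous system \eqref{e1}--\eqref{e6}; the feasible starting pair $(\gamma_0,\delta_0)$ is the unique one compatible with the interior and horizontal boundary equations (noted in Step~2a), and once the leading term is fixed, each subsequent compensation constant $c_{i+1},h_{i+1}$ and vector $\boldsymbol{\xi}_i$ is \emph{forced} by \eqref{c},\eqref{h},\eqref{wxi},\eqref{xi} — there is no freedom at any stage. Hence $\mathbf{d}\equiv 0$ up to the overall scalar $h_0$, which is exactly uniqueness up to a multiplicative constant.

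With (a), (b), (c) in hand the proof is a one-paragraph synthesis: by (a)--(b) the normalized series $\mathcal{C}^{-1}\mathbf{q}(m,n)$ is a probability distribution satisfying the global balance equations of the irreducible Markov chain $\{X(t)\}$; by standard Markov chain theory it is therefore \emph{the} stationary distribution, and by (c) no other form is possible. I expect the main obstacle to be making the uniqueness step fully rigorous: one must confirm that \emph{every} absolutely summable solution of the boundary problem admits the compensation expansion (equivalently, that the set of product-form ``building blocks'' generated by \eqref{t2} together with the boundary relations spans the solution space), rather than merely that the constructed series is \emph{a} solution. This is the same subtlety flagged for the seminal two-queue JSQ analysis in \cite{ad0,ad3}, and I would handle it exactly as there — by showing that prescribing the single leading pair $(\gamma_0,\delta_0,h_0)$ determines the entire sequence, so the solution space of the homogeneous boundary problem is at most one-dimensional.
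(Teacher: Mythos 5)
Your proposal matches the paper's route: Theorem~\ref{theoremf} is presented there without a separate proof, as a direct assembly of Theorem~\ref{theorem} (the formal construction via alternating compensation), Proposition~\ref{pr11} (absolute convergence of the inner series), and the $\mathcal{C}$-convergence computation immediately preceding the statement, with uniqueness asserted in the one-line remark that the series is ``the unique up to a multiplicative constant, solution of the balance equations \eqref{e1}--\eqref{e6}.'' Your added elaboration in part (c) --- that once $(\gamma_0,\delta_0,h_0)$ is fixed by Lemma~\ref{lem0} and Step~2a the recursion \eqref{c}, \eqref{h}, \eqref{wxi}, \eqref{xi} forces all subsequent terms, so the solution space of the homogeneous boundary problem is one-dimensional --- is the standard compensation-method gloss on that remark and does not change the argument.
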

\section{Numerical example}\label{sec:num}\vspace{-0.1in}
In this section we provide a simple numerical illustration. In Table \ref{tab} we list the probabilities $q_{0,n}:=q_{0,n}(0)+q_{0,n}(1)$, $n=0,1,2,3$ that are computed with a precision of $10^{-10}$. The number in parentheses denotes the number of required terms to attain that accuracy. We observe that the number of required terms decrease rapidly. In Figure \ref{ssst} we plot the probability of an empty system, i.e., $q_{0,0}(0)$ for increasing values of $\lambda$. As expected, $q_{0,0}(0)$ decreases as $\lambda$ increases. However, we can also observe that by increasing the retrial rate $\alpha$ from 5 to 8 and 10, $q_{0,0}(0)$ takes larger values, but still retained the decreasing trend. This is expected since the more we increase $\alpha$, the more we increase the chances of retrial jobs to be connected with the server, and thus, orbit queues empty faster.\vspace{-0.15in}  
\begin{figure}[h]
\centering
\includegraphics[scale=0.45]{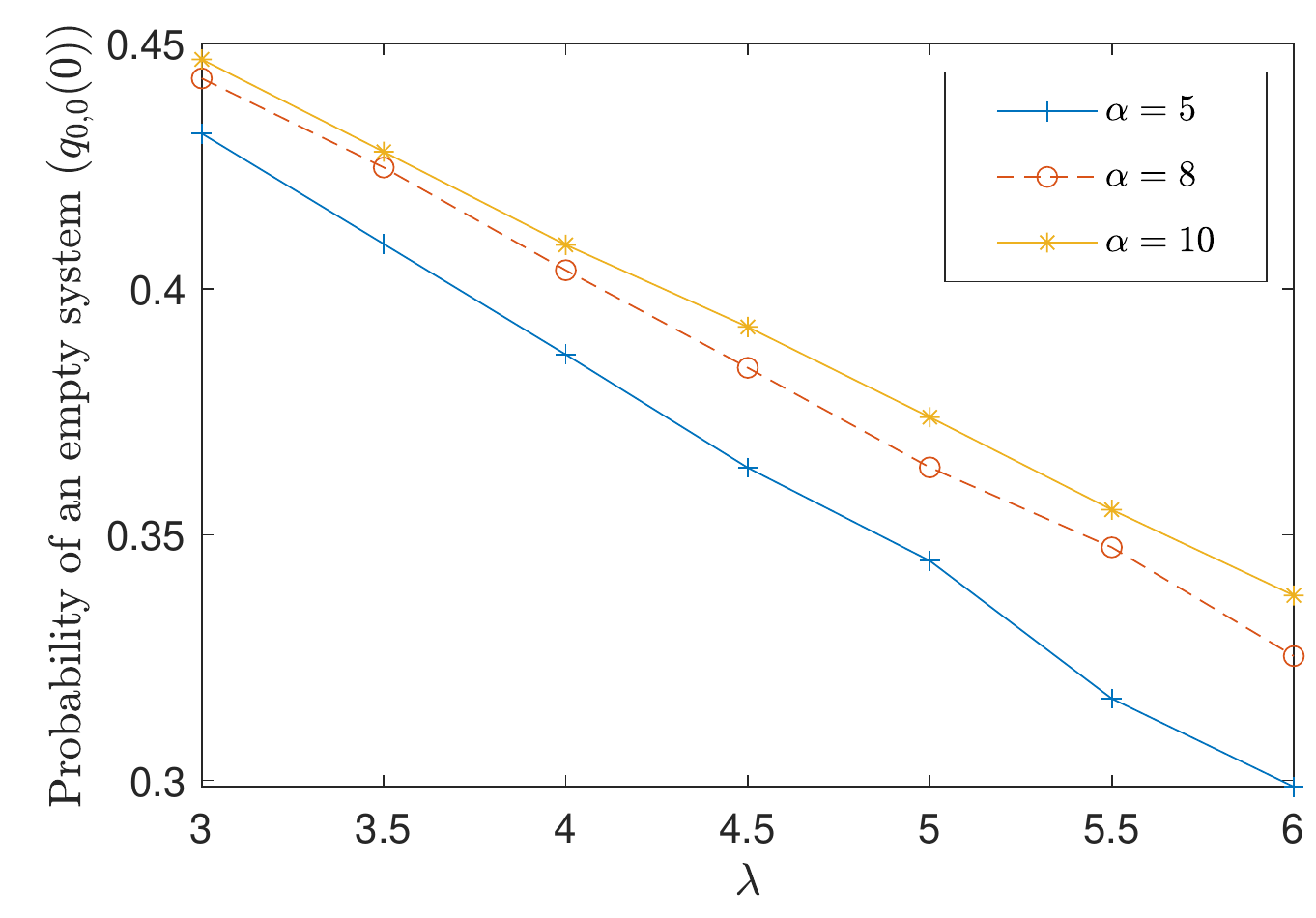}\vspace{-0.15in}
\caption{The probability of an empty system ($q_{0,0}(0)$) for $\mu=10$.}
\label{ssst}
\end{figure}\vspace{-0.3in}
\small{\begin{table}[h]
\caption{Computation of several probabilities for increasing values of $\lambda$ and $\mu=10$, $\alpha=3$.} 
\centering 
\begin{tabular}{|c|c| c| c| c| c|} 
\hline\hline 
$\lambda$& $q_{0,0}$& $q_{0,1}$&$q_{0,2}$&$q_{0,3}$\\ [0.5ex] 
\hline 
$2$ & $0.5639\ (159)$ & $0.0063\ (159)$&$0.1235\ (8)$&$0.2496\ (4)$\\ \hline
$3$ & $0.5437\ (79)$& $0.0125\ (79)$&$0.0986\ (10)$&$0.1992\ (5)$ \\\hline
$4$&$0.5056\ (51)$&$0.0193\ (51)$&$0.0895\ (11)$&$0.1658\ (6)$ \\
\hline 
\end{tabular}
\label{tab} 
\end{table}}\vspace{-0.15in}
\appendix\vspace{-0.1in}
\section*{Acknowledgements}\vspace{-0.1in}
We would like to thank the Editors and the reviewers for the insightful remarks, which helped to improve the original exposition.\vspace{-0.1in}
\section{Proof of Theorem \ref{dec}}\label{appc}\vspace{-0.1in}
Since $\rho<1$, $\{X(t)\}$ has a unique stationary distribution $\mathbf{q}=\{\mathbf{q}^{T}(m),m\geq0\}$, where $\mathbf{q}^{T}(m)=\{\mathbf{q}^{T}(m,n)=(q_{m,n}(0),q_{m,n}(1)),n\geq0\}$, and using $T_{-1}$, $T_{0}$, $T_{1}$ we generate an 1-arithmetic Markov-additive process. Then to prove Theorem \ref{dec} we have to show (see e.g., \cite[Proposition 3.1]{sakuma} that there exist positive vectors $\mathbf{y}$, $\mathbf{p}$, such that \vspace{-0.1in}
\begin{equation}
\begin{array}{cccc}
\mathbf{p}(\rho^{-2}T_{1}+T_{0}+\rho^{2}T_{-1})=\mathbf{0},&\mathbf{p}\mathbf{y}<\infty,&
(\rho^{-2}T_{1}+T_{0}+\rho^{2}T_{-1})\mathbf{y}=\mathbf{0},&\mathbf{q}^{T}(0)\mathbf{y}<\infty.
\end{array}\label{test}
\end{equation}
Define the $S_{*}\times S_{*}$ matrix $K=\rho^{2}T_{-1}+T_{0}+\rho T_{1}$, i.e.,
\begin{displaymath}
\begin{array}{c}
K=\begin{pmatrix}
K_{0}&\bar{K}_{1}&&&\\
K_{-1}&K_{0}&K_{1}&&\\
&K_{-1}&K_{0}&K_{1}&\\
&&\ddots&\ddots&\ddots
\end{pmatrix},\,K_{0}=\mathbf{C}_{0,0}^{T},\,\bar{K}_{1}=2\rho^{2}\mathbf{A}_{-1,1}^{T}+\mathbf{A}_{0,1}^{T},\,K_{1}=\rho^{2}\mathbf{A}_{-1,1}^{T},\,K_{-1}=\rho^{-2}\mathbf{A}_{1,-1}^{T}+\mathbf{A}_{0,-1}^{T}
\end{array}
\end{displaymath}

The following lemma shows that we can find a positive vector $\mathbf{y}=\{\rho^{-n}\mathbf{v},\,n\geq0\}$, such that $K\mathbf{y}=\mathbf{0}$.
\begin{lemma}
Let $\mathbf{v}=(1,\frac{\mu(\lambda+2\alpha)}{\lambda(\lambda+\mu+2\alpha)})^{T}$, such that $\mathbf{y}=\{\rho^{-n}\mathbf{v},\,n\geq0\}$. Then, $\mathbf{y}$ is positive such that $K\mathbf{y}=\mathbf{0}$.
\end{lemma}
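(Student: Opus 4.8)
The plan is to use the ansatz $\mathbf{y}(n)=\rho^{-n}\mathbf{v}$ to collapse the infinite block‑tridiagonal system $K\mathbf{y}=\mathbf{0}$ to a single $2\times2$ eigenvector identity. Writing the $n$‑th block‑row of $K\mathbf{y}=\mathbf{0}$ for $n\ge1$ and substituting $\mathbf{y}(n\pm1)=\rho^{-(n\pm1)}\mathbf{v}$, the common scalar $\rho^{-n}$ (respectively $\rho^{-1}$ when $n=1$) factors out, so the row is equivalent to $\mathbf{M}\mathbf{v}=\mathbf{0}$ with $\mathbf{M}:=\rho K_{-1}+K_{0}+\rho^{-1}K_{1}$; likewise the boundary block‑row ($n=0$) is equivalent to $(K_{0}+\rho^{-1}\bar{K}_{1})\mathbf{v}=\mathbf{0}$. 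Thus the whole claim reduces to these two finite identities.

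The first key observation is that the two reduced conditions coincide: from $\mathbf{A}_{1,-1}=\mathbf{A}_{0,1}$, $\mathbf{A}_{0,-1}=\mathbf{A}_{-1,1}$ one checks directly that $\rho K_{-1}+\rho^{-1}K_{1}=\rho^{-1}\bar{K}_{1}$ (both equal $\bigl(\begin{smallmatrix}0 & 2\rho\alpha\\ 0 & \rho^{-1}\lambda\end{smallmatrix}\bigr)$), hence $K_{0}+\rho^{-1}\bar{K}_{1}=\mathbf{M}$. Consequently it suffices to verify the single identity $\mathbf{M}\mathbf{v}=\mathbf{0}$.

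For this, the next step is to compute $\mathbf{M}$ explicitly from $K_{0}=\mathbf{C}_{0,0}^{T}$ and the blocks $\mathbf{A}_{\cdot,\cdot}$, and then substitute $\rho=\lambda(\lambda+2\alpha)/(2\alpha\mu)$ to simplify its second column, namely $\lambda+2\rho\alpha=\lambda(\lambda+\mu+2\alpha)/\mu$ and $\rho^{-1}\lambda-(\lambda+\mu)=-\lambda(\lambda+\mu+2\alpha)/(\lambda+2\alpha)$, which yields
\begin{equation*}
\mathbf{M}=\begin{pmatrix}-(\lambda+2\alpha) & \dfrac{\lambda(\lambda+\mu+2\alpha)}{\mu}\\[2mm] \mu & -\dfrac{\lambda(\lambda+\mu+2\alpha)}{\lambda+2\alpha}\end{pmatrix}.
\end{equation*}
Substituting $\mathbf{v}=\bigl(1,\ \mu(\lambda+2\alpha)/(\lambda(\lambda+\mu+2\alpha))\bigr)^{T}$ makes both coordinates of $\mathbf{M}\mathbf{v}$ vanish after an immediate cancellation. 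Positivity of $\mathbf{y}$ is then clear, since both components of $\mathbf{v}$ and every factor $\rho^{-n}$ are strictly positive whenever $\lambda,\mu,\alpha>0$ and $\rho<1$.

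There is no genuine analytic obstacle; the only care required is bookkeeping — keeping the boundary super‑block $\bar{K}_{1}$ (which carries the extra incoming rates from level $0$) distinct from the interior super‑block $K_{1}$, treating the $n=0$ row separately, and tracking the powers of $\rho$ absorbed into $\mathbf{M}$. Once the coincidence $K_{0}+\rho^{-1}\bar{K}_{1}=\mathbf{M}$ is noted, the rest is a short, routine $2\times2$ verification.
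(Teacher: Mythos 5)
Your proof is correct and follows essentially the same route as the paper's: both reduce the infinite system to a single $2\times2$ eigenvector identity by noting that the boundary block-row and the interior block-rows collapse to the same condition (via $\mathbf{A}_{1,-1}=\mathbf{A}_{0,1}$ and $\mathbf{A}_{0,-1}=\mathbf{A}_{-1,1}$), and then verify it by direct computation with $\mathbf{v}$. The only differences are cosmetic — you isolate and simplify $\mathbf{M}$ explicitly using $\rho=\lambda(\lambda+2\alpha)/(2\alpha\mu)$, while the paper verifies the unsimplified $2\times2$ matrix — and as a bonus your version makes plain the small typo in the paper's display ($\rho K_1$ should read $\rho^{-1}K_1$, consistent with the subsequent line).
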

\begin{proof}\vspace{-0.1in}
Note that $K\mathbf{y}=\mathbf{0}$ implies,\vspace{-0.1in}
\small{\begin{displaymath}
\begin{array}{rl}
K_{0}\mathbf{v}+\bar{K}_{1}\rho^{-1}\mathbf{v}=&\rho^{-1}[\mathbf{C}_{0,0}^{T}\rho+2\rho^{2}\mathbf{A}_{-1,1}^{T}+\mathbf{A}_{0,1}^{T}]\mathbf{v}\\
=&\rho^{-1}\begin{pmatrix}
-(\lambda+2\alpha)\rho&\rho(\lambda+2\alpha\rho)\\
\mu\rho&\lambda-(\lambda+\mu)\rho
\end{pmatrix}\begin{pmatrix}
1\\
\frac{\mu(\lambda+2\alpha)}{\lambda(\lambda+\mu+2\alpha)}
\end{pmatrix}=\mathbf{0},\,n=0,\\
\rho^{-n}[\rho K_{-1}+K_{0}+\rho K_{1}]\mathbf{v}=&\rho^{-n-1}[\mathbf{A}_{1,-1}^{T}+\rho^{2}(\mathbf{A}_{-1,1}^{T}+\mathbf{A}_{0,-1}^{T})+\rho\mathbf{C}_{0,0}]\mathbf{v}\\=&\rho^{-n-1}[\mathbf{A}_{0,1}^{T}+2\rho^{2}\mathbf{A}_{-1,1}^{T}+\rho\mathbf{C}_{0,0}]\mathbf{v}=\mathbf{0},\,n\geq1.\vspace{-0.1in}
\end{array}
\end{displaymath}}
Therefore, $K\mathbf{y}=\mathbf{0}$, and $\mathbf{y}$ is positive since $\mathbf{v}$ is positive.\vspace{-0.1in}
\end{proof}

We now construct a positive vector $\mathbf{p}=\{\mathbf{p}_{n},n\geq0\}$ such that $\mathbf{p}K=\mathbf{0}$. Let $\Delta_{v}$ be the diagonal matrix whose diagonal elements are the corresponding elements of $\mathbf{v}$. Let the diagonal matrix $D=diag(\Delta_{v},\rho^{-1}\Delta_{v},\rho^{-2}\Delta_{v},\ldots)$, and denote $K_{D}=D^{-1}KD$. 
Note that $K_{D}\mathbf{1}=D^{-1}KD\mathbf{1}=D^{-1}K\mathbf{y}=\mathbf{0}$. Thus, $K_{D}$ is a transition rate matrix of a QBD with finite phases at each level. We now check the ergodicity of $K_{D}$. Let $\mathbf{u}$ the stationary probability vector of
$\Delta_{v}^{-1}[\rho K_{-1}+K_{0}+\rho^{-1}K].
$
The mean drift at internal states is
\small{\begin{displaymath}
\begin{array}{rl}
\mathbf{u}[\rho^{-1}\Delta_{v}^{-1}K_{1}\Delta_{v}-\rho\Delta_{v}^{-1}K_{-1}\Delta_{v}]=&\rho\mathbf{u}\Delta_{v}^{-1}[\rho^{-2}K_{1}-K_{-1}]\mathbf{v}
=-\rho\mathbf{u}\Delta_{v}^{-1}\begin{pmatrix}
0&0\\
0&\lambda\rho^{-2}
\end{pmatrix}\mathbf{v}<0.\vspace{-0.1in}
\end{array}
\end{displaymath}}
Since $K_{D}$ is ergodic \cite{neuts}, there exists a stationary distribution $\bar{\xi}=\{\bar{\xi}_{n},n\geq0\}$ such that $\bar{\xi}K_{D}=\mathbf{0}$, or equivalently, since $D$ is invertible, $\bar{\xi}D^{-1}K=\mathbf{0}$. The next lemma summarizes the construction of $\mathbf{p}$.
\begin{lemma}\label{lem8}
Let $\mathbf{p}:=\bar{\xi}D^{-1}=\{\rho^{n}\bar{\xi}_{n}\Delta_{v}^{-1},n\geq0\}$. Then $\mathbf{p}$ is a positive vector satisfying $\mathbf{p}K=\mathbf{0}$ and $\mathbf{p}\mathbf{y}<\infty$.
\end{lemma}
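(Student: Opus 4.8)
The plan is to obtain $\mathbf{p}$ by transporting the stationary vector $\bar\xi$ of the already‑analysed generator $K_D$ back through the diagonal similarity $D$, and then to read off the three asserted properties; the substantive work (the strictly negative mean drift that makes $K_D$ ergodic, and the explicit right null vector $\mathbf{y}$ of $K$) is already in place, so what remains is essentially bookkeeping plus a little care with infinite matrices.

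First I would record the algebraic identity. By construction $K_D=D^{-1}KD$, and by the preceding ergodicity argument $K_D$ admits a proper stationary distribution $\bar\xi=\{\bar\xi_n,n\ge0\}$ with $\bar\xi K_D=\mathbf{0}$ and $\bar\xi\mathbf{1}=1$. Since $\mathbf{v}>0$ and $\rho\in(0,1)$, the block–diagonal matrix $D=\mathrm{diag}(\Delta_v,\rho^{-1}\Delta_v,\rho^{-2}\Delta_v,\dots)$ has strictly positive diagonal, so its entrywise inverse $D^{-1}=\mathrm{diag}(\Delta_v^{-1},\rho\Delta_v^{-1},\rho^2\Delta_v^{-1},\dots)$ is well defined. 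Because $K$ is block tridiagonal, each coordinate of every product occurring here is a finite sum, so the manipulations associate; multiplying $\bar\xi D^{-1}KD=\mathbf{0}$ on the right by $D^{-1}$ gives $\mathbf{p}K=\bar\xi D^{-1}K=\mathbf{0}$, which is the first claim.

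For positivity I would use that $\{X(t)\}$ is irreducible, so $K_D$ generates an irreducible positive recurrent chain and hence $\bar\xi_n>0$ componentwise for every $n$; multiplying by the positive scalars $\rho^n$ and the positive diagonal matrix $\Delta_v^{-1}$ preserves this, so $\mathbf{p}=\{\rho^n\bar\xi_n\Delta_v^{-1},n\ge0\}$ is a positive vector. For the finiteness $\mathbf{p}\mathbf{y}<\infty$, the key observation is that $\mathbf{y}$ is exactly $D\mathbf{1}$: the $n$‑th block of $D\mathbf{1}$ is $\rho^{-n}\Delta_v\mathbf{1}=\rho^{-n}\mathbf{v}$, which is the $n$‑th block of $\mathbf{y}$. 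Consequently
\[
\mathbf{p}\mathbf{y}=\bar\xi D^{-1}\mathbf{y}=\bar\xi D^{-1}D\mathbf{1}=\bar\xi\mathbf{1}=1<\infty .
\]

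There is no real analytic obstacle in this lemma; the only point that requires genuine attention is justifying the cancellation of the infinite diagonal matrix $D$ in $\bar\xi D^{-1}K=(\bar\xi K_D)D^{-1}$, which is legitimate precisely because $K$ is banded (so all sums are finite) and $D$ is a positive‑diagonal, hence entrywise invertible, operator. Together with the earlier lemma giving $K\mathbf{y}=\mathbf{0}$, this produces the pair $(\mathbf{p},\mathbf{y})$ entering \eqref{test}, which is the remaining ingredient needed to conclude Theorem \ref{dec} via \cite[Proposition 3.1]{sakuma}.
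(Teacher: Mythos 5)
Your proposal is correct and follows essentially the same route as the paper's own proof, which simply notes that positivity and $\mathbf{p}K=\mathbf{0}$ are immediate from the construction and computes $\mathbf{p}\mathbf{y}=\sum_n\rho^{n}\bar{\xi}_{n}\Delta_{v}^{-1}\rho^{-n}\Delta_{v}\mathbf{1}=\bar{\xi}\mathbf{1}=1<\infty$. You merely spell out the bookkeeping (irreducibility for positivity, the banded structure of $K$ to justify associativity, and the identity $\mathbf{y}=D\mathbf{1}$) that the paper leaves implicit.
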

\begin{proof}
We only need to show that $\mathbf{p}\mathbf{y}<\infty$. Indeed, $\mathbf{p}\mathbf{y}=\rho^{n}\bar{\xi}_{n}\Delta_{v}^{-1}\rho^{-n}\Delta_{v}\mathbf{1}=\bar{\xi}_{n}\mathbf{1}=1<\infty$.
\end{proof}
It remains to verify that $\mathbf{q}^{T}(0)\mathbf{y}<\infty$. This is a direct consequence of the following general result. 
\begin{lemma}
For any small $\epsilon>0$, $\limsup_{m\to\infty}\rho^{(-2+\epsilon)m}\mathbf{q}(m,0)=\mathbf{0}$.
\end{lemma}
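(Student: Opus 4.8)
The plan is to bound the tail of the level-zero vector $\mathbf{q}(m,0)$ by relating it to the decay of the full process along the level index $m$. Since $\rho<1$, the QBD $\{X(t)\}$ is positive recurrent with a geometric-type tail in the direction of $X_{1}$; the point is that the decay rate of $\mathbf{q}(m,n)$ in $m$ is $\rho^{2}$, uniformly enough that for any $\epsilon>0$ the quantity $\rho^{(-2+\epsilon)m}\mathbf{q}(m,n)$ vanishes as $m\to\infty$. For the boundary level $n=0$ specifically, I would combine this with the cut/balance structure already exploited in Proposition \ref{prop10}: each component $q_{m,0}(k)$ is controlled by the neighbouring interior probabilities $q_{m,1}(\cdot)$ and $q_{m-1,1}(\cdot)$ through the local balance equation \eqref{e4}, namely $\mathbf{q}(m,0)=-\mathbf{C}_{0,0}^{-1}[\mathbf{A}_{1,-1}\mathbf{q}(m-1,1)+\mathbf{A}_{0,-1}\mathbf{q}(m,1)]$. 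Hence it suffices to establish the claimed $o(\rho^{(2-\epsilon)m})$ bound for $\mathbf{q}(m,1)$, i.e.\ for interior states at fixed $n$.

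The key steps, in order, are: (i) Invoke the standard matrix-geometric / Markov-additive tail theory for the QBD with blocks $T_{-1},T_{0},T_{1}$ (as already set up for Theorem \ref{dec}) to obtain that the convergence norm of the level process equals $\rho^{2}$, so that for every $\epsilon>0$ there is a constant $M_{\epsilon}$ with $\|\mathbf{q}^{T}(m)\|\le M_{\epsilon}\,\rho^{(2-\epsilon)m}$ for all $m$; this uses positive recurrence ($\rho<1$) and irreducibility. (ii) Restrict this inequality to the finitely many phases with $n\in\{0,1\}$ to get $\mathbf{q}(m,0),\mathbf{q}(m,1)=O(\rho^{(2-\epsilon)m})$ componentwise. (iii) Multiply by $\rho^{(-2+\epsilon')m}$ with $0<\epsilon'<\epsilon$ and let $m\to\infty$ to conclude $\limsup_{m\to\infty}\rho^{(-2+\epsilon')m}\mathbf{q}(m,0)=\mathbf{0}$; since $\epsilon'$ is arbitrary in $(0,\epsilon)$ and $\epsilon$ is arbitrary, this is exactly the stated claim. (iv) Feed this back into \eqref{e4} to propagate the bound to $\mathbf{q}(m,0)$ directly if one prefers to avoid relying on the QBD norm at the boundary level.

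Alternatively, and perhaps more self-containedly, one can avoid citing the QBD convergence norm and instead argue directly from summability: the series representation in Theorem \ref{theoremf}, together with Proposition \ref{prop1} giving $|\gamma_{i}|\le(1/3)^{i}\rho^{2}$ and the absolute convergence in Proposition \ref{pr11}, shows that $\mathbf{q}(m,n)$ is a convergent sum of terms each bounded by (a constant times) $(\rho^{2})^{m}(1/3)^{im}$; the dominant term is $i=0$, giving $\mathbf{q}(m,n)=O(\rho^{2m})$, which is even stronger than the required $O(\rho^{(2-\epsilon)m})$. However, since this lemma is used as an \emph{input} to verifying convergence (via $\mathbf{q}^{T}(0)\mathbf{y}<\infty$ in \eqref{test}), invoking Theorem \ref{theoremf} would be circular, so the QBD-based route in steps (i)--(iv) is the correct logical order.

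\textbf{Main obstacle.} The delicate point is step (i): extracting a \emph{uniform} (in $m$) upper bound $\|\mathbf{q}^{T}(m)\|\le M_{\epsilon}\rho^{(2-\epsilon)m}$ from the convergence-norm characterization, rather than merely the limsup statement $\rho^{-2m}\mathbf{q}(m,n)\to Wp(n,k)$ of Theorem \ref{dec} (which by itself already almost suffices but only for a \emph{fixed} truncated set of phases, whereas here $n$ ranges over all of $\{0,1,\dots\}$ when one forms $\mathbf{q}^{T}(0)\mathbf{y}$). The cleanest fix is to note that we only ever need the bound at the single boundary coordinate $n=0$ (plus $n=1$ via \eqref{e4}), which is a finite phase set, so Theorem \ref{dec} applied with that finite phase set already yields $\rho^{-2m}\mathbf{q}(m,0)\to W\mathbf{p}(0,\cdot)$, and hence $\rho^{(-2+\epsilon)m}\mathbf{q}(m,0)\to\mathbf{0}$ for every $\epsilon>0$. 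Thus the lemma is essentially an immediate corollary of Theorem \ref{dec} restricted to the boundary phases, and the only care needed is to make sure this restriction is legitimate, i.e.\ that the Markov-additive tail result of \cite{sakuma} delivers componentwise (not just aggregated) asymptotics — which it does.
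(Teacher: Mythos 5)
Your proposal is circular. You correctly flag that invoking Theorem \ref{theoremf} would be circular, but the route you then settle on suffers from the same defect: both step (i) of your main plan (extracting the uniform bound $\|\mathbf{q}^{T}(m)\|\le M_{\epsilon}\rho^{(2-\epsilon)m}$ from ``the convergence norm of the level process equals $\rho^{2}$'') and your ``cleanest fix'' (declaring the lemma ``an immediate corollary of Theorem \ref{dec} restricted to the boundary phases'') already presuppose the conclusion of Theorem \ref{dec}. But this lemma sits \emph{inside} the proof of Theorem \ref{dec}: its whole purpose is to verify the last of the four conditions in \eqref{test}, namely $\mathbf{q}^{T}(0)\mathbf{y}<\infty$, and only after all four conditions are verified does the Markov-additive tail result of \cite{sakuma} yield $\rho^{-2m}q_{m,n}(k)\to Wp(n,k)$. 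For a QBD with a \emph{countably infinite} phase set this boundary-summability condition is not automatic, and the geometric decay rate of the stationary distribution cannot be read off from the repeated blocks $T_{-1},T_{0},T_{1}$ and their eigenvectors alone; that is exactly why the lemma must be established by an independent argument rather than as a corollary of the tail theorem it is meant to enable.

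The paper's actual argument is of a different nature and is designed precisely to break this circularity. It modifies the process by truncating the phase coordinate (the queue-length difference) to $n\le M$, and for the truncated system it takes the \emph{larger} orbit queue, $\max(Q_1^M,Q_2^M)$, rather than the smaller one, as the QBD level. The truncated process $X^M(t)$ then has a \emph{finite} phase set $\{0,\dots,M\}\times\{0,1\}$, so the classical finite-phase matrix-geometric theory of \cite{neuts} applies directly and its decay rate can be computed from the spectral radius of the rate matrix; one shows it equals $\rho^2$, and a comparison between the truncated and original processes transfers the required $o(\rho^{(2-\epsilon)m})$ bound to $\mathbf{q}(m,0)$. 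This is the truncation device of \cite[Lemma 5.3]{sakuma}. A corrected version of your proposal would have to substitute some such bootstrapping argument — one that does not already assume the infinite-phase decay rate — for your step (i); your local-balance observation via \eqref{e4} is fine as a finishing touch but cannot replace that missing input.
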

\begin{proof}
The proof is based on employing truncation arguments as in \cite[Lemma 5.3]{sakuma}. To proceed, we have to modify the original system described by $\{X(t)\}$, such that the shortest orbit queue will not retry when the difference of the two orbit queues attains a predefined constant $M\geq3$. Namely, the state $(m,n,k)\in S$ is truncated as $n\leq M$ by removing the state transitions from $(m,M,k)$ to $(m-1,M+1,k)$. For this modified model, the longer orbit queue is set for the level instead of the shortest queue. Let $Q_{i}^{M}(t)$ the queue length in orbit $i$ for the truncated model, and  $X^{M}(t)=\{(max(Q_{1}^{M}(t),Q_{2}^{M}(t)),|Q_{1}^{M}(t)-Q_{2}^{M}(t)|,C(t));t\geq0\}$ is a QBD with complex boundary behaviour. For this truncated longer orbit queue model we can prove a series of results that show that the decay rate of the original model is $\rho^2$. 
Due to space constraints the rest of the proof is omitted. For more details see \cite[pp. 196-199]{sakuma}.\end{proof}\vspace{-0.15in}
\bibliographystyle{spmpsci}
\bibliography{arxiv_versionv2}



\end{document}